\newtheorem{thm}{Theorem}[section]
\newtheorem{mthm}[thm]{Main Theorem}
\newtheorem{lem}[thm]{Lemma}
\newtheorem{cor}[thm]{Corollary}
\newtheorem{open}[thm]{Open Problem}
\theoremstyle{definition}
\newtheorem{dfn}[thm]{Definition}
\newtheorem{exm}[thm]{Example}
\newtheorem{exms}[thm]{Examples}
\theoremstyle{remark}
\newtheorem{rem}[thm]{Remark}
\newcommand{\exmsymbol}{\hfill$\circ$}
\newcommand{\cset}{\mathds{C}}
\newcommand{\nset}{\mathds{N}}
\newcommand{\qset}{\mathds{Q}}
\newcommand{\rset}{\mathds{R}}
\newcommand{\diff}{\mathrm{d}}
\newcommand{\pos}{\mathrm{Pos}}
\newcommand{\supp}{\mathrm{supp}\,}
\newcommand{\dom}{\mathrm{dom}\,}
\newcommand{\id}{\mathrm{id}}
\newcommand{\one}{\mathds{1}}
\newcommand{\cS}{\mathcal{S}}
\newcommand{\cV}{\mathcal{V}}
\newcommand{\fB}{\mathfrak{B}}
\newcommand{\fD}{\mathfrak{D}}
\newcommand{\fd}{\mathfrak{d}}
\newcommand{\fg}{\mathfrak{g}}
\newcommand{\fp}{\mathfrak{p}}
\newcommand{\fs}{\mathfrak{s}}
\newcommand{\fS}{\mathfrak{S}}
\author{Philipp J.\ di Dio}
\address{Department of Mathematics and Statistics, University of Konstanz, Universit\"atsstra{\ss}e 10, D-78464 Konstanz, Germany}
\address{Zukunftskolleg, Universtity of Konstanz, Universit\"atsstra{\ss}e 10, D-78464 Konstanz, Germany}
\address{philipp.didio@uni-konstanz.de}
\journal{ArXiv}
\title{On Positivity Preservers with constant Coefficients and their Generators}
\begin{document}

\begin{abstract}
In this work we study positivity preservers $T:\rset[x_1,\dots,x_n]\to\rset[x_1,\dots,x_n]$ with constant coefficients and define their generators $A$ if they exist, i.e., $\exp(A) = T$.
We use the theory of regular Fr\'echet Lie groups to show the first main result.
A positivity preserver with constant coefficients has a generator if and only if it is represented by an infinitely divisible measure (\Cref{thm:infinitelyDivisibleMeasure}).
In the second main result (\Cref{thm:mainPosGenerators}) we use the L\'evy--Khinchin formula to fully characterize the generators of positivity preservers with constant coefficients.
\end{abstract}

\begin{keyword}
positivity preserver\sep generator\sep constant coefficient\sep moments
\MSC[2020] Primary 44A60, 47A57, 15A04; Secondary 12D15, 45P05, 47B38.
\end{keyword}

\maketitle



\section{Introduction}

Non-negative polynomials are widely applied and studied.
A first step to investigate non-negative polynomials is to study the convex cone
\[\pos(K) := \{p\in\rset[x_1,\dots,x_n] \,|\, p(x)\geq 0\ \text{for all}\ x\in K\}\]
of non-negative polynomials on some $K\subseteq\rset^n$.

The second logical step is to study (linear) maps
\[T:\rset[x_1,\dots,x_n]\to\rset[x_1,\dots,x_n]\]
between polynomials, especially when they preserve non-negativity, i.e.,
\[T\pos(K)\subseteq\pos(K),\]
see e.g.\ \cite{guterman08,netzer10,borcea11}.
With the multi-index notation $\partial^\alpha = \partial_1^{\alpha_1}\cdots \partial_n^{\alpha_n}$ for all $\alpha = (\alpha_1,\dots,\alpha_n)\in\nset_0^n$ with $n\in\nset$ the following is long known.
An explicit proof can e.g.\ be found in \cite{netzer10}.

\begin{lem}[folklore, see e.g.\ {\cite[Lem.\ 2.3]{netzer10}}]\label{lem:folklore}
Let $n\in\nset_0$ and let
\[T:\rset[x_1,\dots,x_n]\to\rset[x_1,\dots,x_n]\]
be linear.
Then for all $\alpha\in\nset_0^n$ there exist unique $q_\alpha\in\rset[x_1,\dots,x_n]$ such that
\[ T = \sum_{\alpha\in\nset_0^n} q_\alpha\cdot \partial^\alpha.\]
\end{lem}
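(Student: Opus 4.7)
The plan is to construct the $q_\alpha$ recursively by comparing the action of both sides of the desired identity on the monomial basis $\{x^\beta \,:\, \beta \in \nset_0^n\}$ of $\rset[x_1,\dots,x_n]$. The key computation is
\[ \partial^\alpha x^\beta \;=\; \frac{\beta!}{(\beta-\alpha)!}\, x^{\beta-\alpha} \quad \text{if } \alpha \leq \beta \text{ componentwise}, \qquad \partial^\alpha x^\beta = 0 \text{ otherwise.} \]
In particular $\partial^\alpha x^\alpha = \alpha!$, giving the invertible diagonal of an upper-triangular system that I will exploit.

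First I would fix a linear order on $\nset_0^n$ refining the componentwise partial order (for instance graded lexicographic) and proceed by induction. The base case is forced: evaluating $T = \sum_\gamma q_\gamma \partial^\gamma$ at the constant polynomial $1$ yields $q_0 = T(1)$. Assume now that $q_\beta \in \rset[x_1,\dots,x_n]$ has been determined for every $\beta$ strictly below $\alpha$ in the componentwise order. Applying the identity to $x^\alpha$ must give
\[ T(x^\alpha) \;=\; \alpha!\cdot q_\alpha \;+\; \sum_{\beta \leq \alpha,\, \beta \neq \alpha} \frac{\alpha!}{(\alpha-\beta)!}\, q_\beta \cdot x^{\alpha-\beta}, \]
and since $\alpha! \neq 0$ this equation has the unique solution
\[ q_\alpha \;=\; \frac{1}{\alpha!}\left( T(x^\alpha) \;-\; \sum_{\beta \leq \alpha,\, \beta \neq \alpha} \frac{\alpha!}{(\alpha-\beta)!}\, q_\beta \cdot x^{\alpha-\beta} \right) \;\in\; \rset[x_1,\dots,x_n], \]
which simultaneously defines $q_\alpha$ and establishes its uniqueness.

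Second, I would verify that the formal sum $S := \sum_{\alpha\in\nset_0^n} q_\alpha\, \partial^\alpha$ is a well-defined operator on $\rset[x_1,\dots,x_n]$. For any $p$ of degree $d$ one has $\partial^\alpha p = 0$ whenever $|\alpha|>d$, so only finitely many summands contribute and $S(p)$ is a polynomial. By construction $S(x^\alpha) = T(x^\alpha)$ for every $\alpha \in \nset_0^n$, and because both $S$ and $T$ are linear and agree on the monomial basis, $S = T$.

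The main obstacle is essentially bookkeeping: picking a well-founded order on $\nset_0^n$ so the induction closes and checking that the recursion stays inside $\rset[x_1,\dots,x_n]$. Beyond that, the argument is nothing more than an upper-triangular linear solve with nonzero diagonal entries, so there is no genuine difficulty to overcome.
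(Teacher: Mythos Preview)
Your argument is correct: the recursive definition of $q_\alpha$ via the upper-triangular system $\partial^\alpha x^\alpha = \alpha!$ is exactly the standard way to prove this, and your verification that the formal sum acts as a well-defined operator and agrees with $T$ on the monomial basis is clean. Note, however, that the paper does not actually supply a proof of this lemma; it is stated as folklore with a pointer to \cite[Lem.\ 2.3]{netzer10}, so there is nothing in the paper to compare your approach against. Your write-up is essentially the argument one finds in that reference.
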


The map $T:\rset[x_1,\dots,x_n]\to\rset[x_1,\dots,x_n]$ is called to have constant coefficients if $q_\alpha\in\rset$ for all $\alpha\in\nset_0^n$.
The subset with $q_0=1$ is denoted by
\[\fD := \left\{\sum_{\alpha\in\nset_0^n} q_\alpha\cdot \partial^\alpha \,\middle|\, q_\alpha\in\rset,\ q_0 = 1\right\}\quad \subsetneq \rset[[\partial_1,\dots,\partial_n]].\]
Positivity preservers are fully described by their polynomial coefficients. We have the following.

\begin{thm}[see e.g.\ {\cite[Thm.\ 3.1]{borcea11}}]\label{thm:posPresChar}
Let $n\in\nset$ and let $T:\rset[x_1,\dots,x_n]\to\rset[x_1,\dots,x_n]$ be linear.
Then the following are equivalent:
\begin{enumerate}[(i)]
\item $T\pos(\rset^n)\subseteq\pos(\rset^n)$.

\item $(\alpha!\cdot q_\alpha(y))_{\alpha\in\nset_0^n}$ is a moment sequence for all $y\in\rset^n$.
\end{enumerate}
\end{thm}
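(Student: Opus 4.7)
The plan is to attach to each $y\in\rset^n$ the linear functional
\[ L_y:\rset[x_1,\dots,x_n]\to\rset,\qquad L_y(p):=(Tp)(y), \]
and to translate both sides of the equivalence into statements about $L_y$. Condition (i) is, for every $y\in\rset^n$, the assertion $L_y(\pos(\rset^n))\subseteq[0,\infty)$. By Haviland's theorem on the $\rset^n$-moment problem, this is equivalent to the existence, for every $y$, of a non-negative Radon measure $\mu_y$ on $\rset^n$ with
\[ L_y(p)=\int_{\rset^n} p(x)\,\diff\mu_y(x)\qquad\text{for all }p\in\rset[x_1,\dots,x_n]. \]
This reformulation is the backbone of the proof; the remaining work is to match the moments of $\mu_y$ (after a translation) with the numbers $\alpha!\,q_\alpha(y)$.

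The decisive computation links $L_y$ directly to the coefficients $q_\alpha$. Using the representation $T=\sum_\beta q_\beta(x)\partial^\beta$ from \Cref{lem:folklore} and the elementary identity $\partial^\beta(x-y)^\alpha|_{x=y}=\alpha!\,\delta_{\alpha,\beta}$, one obtains, for each multi-index $\alpha$ and each $y$,
\[ L_y\bigl((x-y)^\alpha\bigr)=\sum_\beta q_\beta(y)\,\partial^\beta(x-y)^\alpha\Big|_{x=y}=\alpha!\,q_\alpha(y). \]

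For (i)$\Rightarrow$(ii), I take the measure $\mu_y$ supplied by Haviland's theorem and push it forward by the translation $\tau_y(x)=x-y$ to a measure $\nu_y:=(\tau_y)_*\mu_y$. Its moments are
\[ \int_{\rset^n} u^\alpha\,\diff\nu_y(u)=\int_{\rset^n}(x-y)^\alpha\,\diff\mu_y(x)=L_y\bigl((x-y)^\alpha\bigr)=\alpha!\,q_\alpha(y), \]
so $(\alpha!\,q_\alpha(y))_{\alpha\in\nset_0^n}$ is a moment sequence, namely that of $\nu_y$.

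For (ii)$\Rightarrow$(i), fix $y$ and let $\nu_y$ represent the moment sequence $(\alpha!\,q_\alpha(y))_\alpha$. Taylor's formula at $y$ is a \emph{finite} sum, $p(y+u)=\sum_\alpha\frac{u^\alpha}{\alpha!}(\partial^\alpha p)(y)$, so interchanging the finite sum with the integral gives
\[ \int_{\rset^n} p(y+u)\,\diff\nu_y(u)=\sum_\alpha \frac{(\partial^\alpha p)(y)}{\alpha!}\int_{\rset^n} u^\alpha\,\diff\nu_y(u)=\sum_\alpha q_\alpha(y)(\partial^\alpha p)(y)=(Tp)(y). \]
If $p\in\pos(\rset^n)$, the integrand is non-negative, hence $(Tp)(y)\geq 0$ for every $y$, i.e.\ $Tp\in\pos(\rset^n)$.

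The only non-trivial input is Haviland's theorem on $\rset^n$; everything else is Taylor expansion together with a pushforward of the representing measure. I do not expect a real obstacle. The single point requiring care is to observe that for any fixed polynomial $p$ only finitely many $q_\alpha$ contribute to $(Tp)(y)$, so that exchanging the finite sum with the integral against $\nu_y$ in the second direction and truncating the Haviland measure's moment identities in the first direction are both automatic.
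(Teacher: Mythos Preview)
The paper does not supply its own proof of this theorem; it is quoted as a known result from \cite[Thm.~3.1]{borcea11}, so there is no in-paper argument to compare against. Your argument is correct and is exactly the standard route: evaluate $T$ at a point to get a linear functional, invoke Haviland to pass between non-negativity on $\pos(\rset^n)$ and existence of a representing measure, and identify the moments via the Taylor identity $\partial^\beta(x-y)^\alpha\big|_{x=y}=\alpha!\,\delta_{\alpha,\beta}$. The translation/pushforward is the clean way to match $(\alpha!\,q_\alpha(y))_\alpha$ with genuine moments, and your remark that only finitely many $q_\alpha$ act on a fixed $p$ disposes of any convergence worries. Nothing is missing.
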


\begin{exm}\label{exm:heat}
Let $n=1$.
Then
\[\exp(t\cdot\partial_x^2) = \sum_{k\in\nset_0} \frac{t^k\cdot \partial_x^{2k}}{k!}\]
is a positivity preserver for all $t\geq 0$.
\exmsymbol
\end{exm}

The following is a linear operator $T$ which is not a positivity preserver.

\begin{exm}\label{exm:notpos}
Let $k\geq 3$ and $a\in\rset\setminus\{0\}$.
Then
\[\exp(a\partial_x^k) := \sum_{j\in\nset_0} \frac{a^j\cdot\partial_x^{j\cdot k}}{j!}\]
is not a positivity preserver since $q_{2k} = a^{2k} \neq 0$ but $q_{2k+2} = 0$, i.e., $(j!\cdot q_j)_{j\in\nset_0}$ is not a moment sequence.
\exmsymbol
\end{exm}

From elementary calculations and measure theory we get the following additional properties of positivity preservers.
Let $T$ be a positivity preserver with constant coefficients and $\mu$ be a representing measure of the corresponding moment sequence $s = (s_\alpha)_{\alpha\in\nset_0^n} = (\alpha!\cdot q_\alpha)_{\alpha\in\nset_0^n}$, then
\[(Tf)(x) = \int_{\rset^n} f(x+y)~\diff\mu(y)\]
for all $f\in\rset[x_1,\dots,x_n]$. For short we call $\mu$ also a \emph{representing measure of $T$}.
If follows for two such operators $T$ and $T'$ with representing measures $\mu$ and $\mu'$ that
\begin{equation}\label{eq:convolution}
(T T' f)(x) = \int f(x+y)~\diff(\mu * \mu')(y)
\end{equation}
where $\mu * \mu'$ is the \emph{convolution of the two measures} $\mu$ and $\mu'$:
\begin{multline}\label{eq:measureConv}
(\mu*\mu')(A) := \int_{\rset^n\times\rset^n} \chi_A(x+y)~\diff\mu(x)~\diff\mu'(y)\\
= \int_{\rset^n} \mu(A-y)~\diff\mu'(y) = \int_{\rset^n} \mu'(A-x)~\diff\mu(x)
\end{multline}
for any $A\in\fB(\rset^n)$, see e.g.\ \cite[Sect.\ 3.9]{bogachevMeasureTheory}.
Here, $\chi_A$ is the characteristic function of the Borel set $A$.
For the supports we have
\begin{equation}\label{eq:measureConvolutionSupport}
\supp (\mu*\mu') = \supp\mu + \supp\mu'.
\end{equation}
This can easily be proved from (\ref{eq:measureConv}) or found in the literature, see e.g.\ \cite[Prop.\ 14.5 (ii)]{choquet69} for a special case.
We abbreviate
\[\mu^{*k} := \underbrace{\mu*\dots*\mu}_{k\text{-times}}\]
for all $k\in\nset$ and set $\mu^{*0} := \delta_0$ with $\delta_0$ the Dirac measure centered at $0$.

Previously we investigated the heat semi-group (see \Cref{exm:heat} for $n=1$) and its action on (non-negative) polynomials, see \cite{curtoHeat22,curtoHeat23}.
For $n\in\nset$ we have that $\exp(t\Delta)p_0$ is the unique solution of the polynomial valued heat equation
\[\partial_t p = \Delta p\]
with initial values $p_0\in\rset[x_1,\dots,x_n]$.
In \cite{curtoHeat23} we observed the very strange behavior that several non-negative polynomials which are not sums of squares (e.g.\ the Motzkin and the Choi--Lam polynomial) become a sum of squares in finite time under the heat equation.
In \cite[Thm.\ 3.20]{curtoHeat23} we showed that every non-negative $p\in\rset[x,y,z]_{\leq 4}$ becomes a sum of squares in finite time.
All these observations hold for the heat equation, i.e., the family $(\exp(t\Delta))_{t\geq 0}$ of positivity preservers with constant coefficients with the generator $\Delta = \partial_1^2 + \dots + \partial_n^2$.
So a natural question to further study these effects is to understand any family $(\exp(tA))_{t\geq 0}$ of positivity preservers and hence to determine all possible generators $A$.
This is the main question of the current work for the constant coefficient case.
It is answered in \Cref{thm:mainPosGenerators}.

The paper is structured as follows.
To define and work with $\exp(A)$ we repeat in the preliminaries for the readers convenience the notion of Lie groups, Lie algebras, and especially their lesser known infinite dimensional versions of regular Fr\'echet Lie groups.
We will see that $\fD$ is a regular Fr\'echet Lie group with a Lie algebra $\fd$.
By \Cref{thm:posPresChar} we transport this property to sequences in \Cref{sec:sequences}.
\Cref{sec:generators} contains the two main results.
In the first \Cref{thm:infinitelyDivisibleMeasure} we show that a positivity preserver with constant coefficients has a generator if and only if it is represented by an infinitely divisible measure.
Using the L\'evy--Khinchin formula (which is also given in the preliminary, see \Cref{thm:leviKhinchinFormula}) we give in the second \Cref{thm:mainPosGenerators} the full description of all generators of positivity preservers with constant coefficients.
In \Cref{sec:strange} we discuss a strange action on non-negative polynomials on $[0,\infty)$ caused by the heat equation with Dirichlet boundary conditions.
We end this paper with a summary and an open question.

\section{Preliminaries}

Lie groups and Lie algebras are standard concepts in mathematics \cite{warner83}.
However, this only applies to the finite dimensional cases.
For the readers convenience we give here the infinite dimensional definitions and examples we need to make the paper as self-contained as possible.
For the sake of completeness we also include the explicit statement of the L\'evy--Khinchin formula in \Cref{thm:leviKhinchinFormula}.

\subsection{Lie Groups and their Lie Algebras}

\begin{dfn}
A group $(G,\,\cdot\,)$ is called a \emph{Lie group} if $G$ is also a $n$-dimensional smooth manifold, $n\in\nset$, such that $G\times G\to G: (A,B)\mapsto AB^{-1}$ is smooth.
The \emph{Lie algebra $\fg$ of $G$} is the tangent space $T_e G$ at the identity $e\in G$.
\end{dfn}

The connection between the Lie algebra $\fg$ and the Lie group $G$ is given by the \emph{exponential map} $\exp:\fg\to G$.
For the special case $G = \mathrm{Gl}(n,\cset)$ the exponential mapping fulfills the following.

\begin{lem}[see e.g.\ {\cite[p.\ 134, Ex.\ 15]{warner83}}]\label{lem:expLog}
Let $n\in\nset$.
The following hold:
\begin{enumerate}[(i)]
\item The exponential map
\[\exp:\mathrm{Gl}(n,\cset)\to \mathrm{Gl}(n,\cset),\quad A\mapsto \exp A := \sum_{k\in\nset_0} \frac{A^k}{k!}\]
is surjective.

\item Let $\id+N\in\mathrm{Gl}(n,\cset)$ be such that $N$ is nilpotent.
Then
\[\log (\id + N) = -\sum_{k\in\nset} \frac{(-N)^k}{k}\]
is well-defined and
\[\exp(\log (\id+N)) = \id+N\]
holds.
\end{enumerate}
\end{lem}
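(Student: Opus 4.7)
The plan is to prove part (ii) first, then use it to establish part (i) via the Jordan normal form.

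For part (ii), the key observation is that nilpotency of $N$ makes both the defining series finite: if $N^{m}=0$, then $\log(\id+N)=-\sum_{k=1}^{m-1}\frac{(-N)^k}{k}$ is a matrix polynomial in $N$, hence well-defined, and the resulting matrix $M:=\log(\id+N)$ is again nilpotent (its powers involve only $N^k$ with $k\geq 1$). Consequently the exponential series $\exp(M)=\sum_{k\geq 0} M^k/k!$ also terminates. To identify $\exp(M)$ with $\id+N$, I would invoke the formal power series identity $\exp(\log(1+x))=1+x$ in $\qset[[x]]$: substituting $x\mapsto N$ is legitimate because all occurring sums are finite, so the formal identity transfers verbatim to the matrix setting. (Equivalently, one can argue that both $\exp(M)$ and $\id+N$ depend polynomially on the entries of $N$, agree on an open dense set of diagonalizable cases, and hence everywhere.)

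For part (i), let $A\in\mathrm{Gl}(n,\cset)$ and write $A=PJP^{-1}$ with $J=J_1\oplus\cdots\oplus J_r$ its Jordan normal form. Since conjugation commutes with the exponential series, it suffices to realize each Jordan block $J_i$ as $\exp(B_i)$ for some $B_i$; then $A=\exp\bigl(P(B_1\oplus\cdots\oplus B_r)P^{-1}\bigr)$. Fix a block $J_i=\lambda\id+N$ with $\lambda\neq 0$ (invertibility of $A$ forces every eigenvalue to be nonzero) and $N$ nilpotent. Choose $\mu\in\cset$ with $e^{\mu}=\lambda$ (surjectivity of the scalar complex exponential), and set
\[
B_i := \mu\cdot\id + \log(\id + \lambda^{-1}N),
\]
which is well-defined by part (ii). The two summands commute because $\mu\cdot\id$ is scalar, so
\[
\exp(B_i) = \exp(\mu\id)\cdot\exp\bigl(\log(\id+\lambda^{-1}N)\bigr) = \lambda\id\cdot(\id+\lambda^{-1}N) = \lambda\id+N = J_i,
\]
using part (ii) in the middle step.

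The main obstacle is the transfer of the formal identity $\exp\circ\log=\id$ in part (ii); once that is justified by nilpotency, part (i) is a straightforward Jordan-block bookkeeping exercise. No infinite-dimensional machinery is required here — the proof uses only that $\mathrm{Gl}(n,\cset)$ admits a Jordan form, that the complex scalar exponential is surjective onto $\cset^\times$, and that nilpotent matrices make all relevant series truncate.
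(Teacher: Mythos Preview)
Your proof is correct and aligns with what the paper indicates: the paper does not supply a proof of this lemma but only remarks that part (ii) ``follows from formal power series calculations,'' which is precisely your argument via the identity $\exp(\log(1+x))=1+x$ in $\qset[[x]]$ transferred through nilpotency. For part (i) the paper gives no argument at all (the statement is cited from Warner), and your Jordan-block reduction is the standard route; so you are supplying strictly more detail than the paper, not deviating from it.
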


The proof of \Cref{lem:expLog} (ii) follows from formal power series calculations.
For more on Lie groups and Lie algebras see e.g.\ \cite{warner83}.

The charts $\varphi:U\subseteq\rset^n\to G$ of the $n$-dimensional smooth manifold $G$ induce the Euclidean topology on the group $G$ and $(A,B)\mapsto AB^{-1}$ is smooth with respect to this topology.
When extending this to infinite dimensions more than one topology is possible and the choice of topology on $G$ is important.

\subsection{Fr\'echet Spaces}

\begin{dfn}
A topological vector space $\cV$ is called a \emph{Fr\'echet space} if the following three conditions are fulfilled:
\begin{enumerate}[(i)]
\item $\cV$ is metrizable (i.e., $\cV$ is Hausdorff),
\item $\cV$ is complete, and
\item $\cV$ is locally convex.
\end{enumerate}
\end{dfn}

It is clear that $\rset[x_1,\dots,x_n]_{\leq d}$ is a Fr\'echet space for all $d\in\nset_0$ since they are finite dimensional.
Their Fr\'echet topology is unique.

\begin{exm}[see e.g.\ {\cite[pp.\ 91--92, Ex.\ III]{treves67}}]\label{exm:powerSeriesTop}
Let $n\in\nset$.
The vector space $\rset[[x_1,\dots,x_n]]$ of formal power series
\[p=\sum_{\alpha\in\nset_0^n} c_\alpha\cdot x^\alpha \qquad\text{with}\ c_\alpha\in\rset\]
equipped with the topology induced by the semi-norms
\[|p|_d := \sup_{|\alpha|\leq d} |c_\alpha| \qquad\text{with}\ d\in\nset_0\]
is a Fr\'echet space.
In other words we have the convergence
\[p_i = \sum_{\alpha\in\nset_0^n} c_{i,\alpha}\cdot x^\alpha \quad\to\quad p=\sum_{\alpha\in\nset_0} c_\alpha\cdot x^\alpha \qquad\text{for}\ i\to\infty\]
if and only if $c_{i,\alpha}\xrightarrow{i\to\infty} c_\alpha$ for all $\alpha\in\nset_0^n$.
\exmsymbol
\end{exm}

For more on topological vector spaces see e.g.\ \cite{treves67,schaefer99}.

\subsection{Regular Fr\'echet Lie Groups and their Lie Algebras}

Already Hideki Omori stated the following, see \cite[pp.\ III-IV]{omori74}:
\begin{quote}\itshape
[G]eneral Fr\'echet manifolds are very difficult to treat.
For instance, there are some difficulties in the definition of tangent bundles, hence in the definition of the concept of $C^\infty$-mappings.
Of course, there is neither an implicit function theorem nor a Frobenius theorem in general.
Thus, it is difficult to give a theory of general Fr\'echet Lie groups.
\end{quote}
A more detailed study is given by Omori in \cite{omori97} and the theory successfully evolved since then, see e.g.\ \cite{omori74,kac85,omori97,stras02,wurz04,schmed23} and references therein.
We will give here only the basic definitions which will be needed for our study.

For the definition of $C^1(G,\fg)$ see e.g.\ \cite[pp.\ 9--10]{omori97}.
Since $\rset[[\partial_1,\dots,\partial_n]]$ has the Fr\'echet topology in \Cref{exm:powerSeriesTop}, i.e., the coordinate-wise convergence, we have that for every $m\in\nset_0$ a function $F:\rset\to\rset[[\partial_1,\dots,\partial_n]]$, $F(t) = \sum_{\alpha\in\nset_0^n} F_\alpha(t)\cdot\partial^\alpha$ is $C^m$ if and only if every coordinate $F_\alpha$ is $C^m$.

\begin{dfn}[see e.g.\ {\cite[p.\ 63, Dfn.\ 1.1]{omori97}}]\label{dfn:frechetLieGroup}
We call $(G,\,\cdot\,)$ a (\emph{regular}) \emph{Fr\'echet Lie group} if the following conditions are fulfilled:
\begin{enumerate}[(i)]
\item $G$ is an infinite dimensional smooth Fr\'echet manifold.

\item $(G,\,\cdot\,)$ is a group.

\item The map $G\times G\to G$, $(A,B)\mapsto A\cdot B^{-1}$ is smooth.

\item The \emph{Fr\'echet Lie algebra} $\fg$ of $G$ is isomorphic to the tangent space $T_e G$ of $G$ at the unit element $e\in G$.

\item $\exp:\fg\to G$ is a smooth mapping such that
\[\left.\frac{\diff}{\diff t}\right|_{t=0} \exp(t u) = u\]
holds for all $u\in\fg$.

\item The space $C^1(G,\fg)$ of $C^1$-curves in $G$ coincides with the set of all $C^1$-curves in $G$ under the Fr\'echet topology.
\end{enumerate}
\end{dfn}

For more on infinite dimensional manifolds, differential calculus, Lie groups, and Lie algebras see e.g.\ \cite{leslie67,omori97,schmed23}.

\subsection{The Lie Group $\fD_d$}
\label{sec:liegroup}

\begin{dfn}\label{dfn:Dd}
Let $n\in\nset$ and $d\in\nset_0$. We define
$\fD_d := \fD|_{\rset[x_1,\dots,x_n]_{\leq d}}$.
\end{dfn}

Since
\[A\rset[x_1,\dots,x_n]_{\leq d}\subseteq\rset[x_1,\dots,x_n]_{\leq d}\]
holds for all $d\in\nset_0$ and $A\in\fD$ the $\fD_d$ are well-defined.
From \Cref{dfn:Dd} we see that $\fD_d$ consists only of operators of the form
\[\sum_{\alpha\in\nset_0^n: |\alpha|\leq d} c_\alpha\cdot\partial^\alpha\]
with $c_\alpha\in\rset$ and $c_0=1$
since on $\rset[x_1,\dots,x_n]_{\leq d}$ every operator $\partial^\beta$ with $|\beta|>d$ fulfills $\partial^\beta p = 0$ for all $p\in\rset[x_1,\dots,x_n]_{\leq d}$, i.e., $\partial^\beta = 0$ on $\rset[x_1,\dots,x_n]_{\leq d}$.

\begin{rem}
We can also define $\fD_d$ by $\fD/\langle \partial^\alpha\, |\, |\alpha|=d+1\rangle$.
Both definitions are almost identical.
However, \Cref{dfn:Dd} has the following advantage.
In $\fD/\langle \partial^\alpha\, |\, |\alpha|=d+1\rangle$ we have the problem that we are working with equivalence classes and hence we can not calculate $A+B$ for $A\in\fD_d$ and $B\in\fD_{e}$ for $d\neq e$.
With \Cref{dfn:Dd} we can calculate $A+B$ for $A\in\fD_d$ and $B\in\fD_e$ with $d\neq e$ since $A+B$ is defined on
\[\dom(A+B) = \dom A \cap \dom B = \rset[x_1,\dots,x_n]_{\leq\min(d,e)}\]
as usual for (unbounded) operators \cite{schmudUnbound}.
\Cref{dfn:Dd} can then even be used to calculate $A+B$ for $A$ on $\rset[x_1,\dots,x_n]_{\leq d}$ and $B$ on $\rset[x_1,\dots,x_m]_{\leq e}$ for $n\neq m$ and $d\neq e$ on
\[\dom(A+B)=\rset[x_1,\dots,x_{\min\{n,m\}}]_{\leq\min\{d,e\}}.\tag*{$\circ$}\]
\end{rem}

\begin{exm}\label{exm:n1d3}
Let $n=1$ and $d=3$. Then
\[\fD_3 = \left\{ 1 + c_1\partial_x + c_2\partial_x^2 + c_3\partial_x^3
\;\middle|\; c_1,c_2,c_3\in\rset \right\}\quad \text{on}\quad \rset[x]_{\leq 3} .\]
Let
\[A = 1 + a_1\partial_x + a_2\partial_x^2 + a_3\partial_x^3 \quad\text{and}\quad B = 1 + b_1\partial_x + b_2\partial_x^2 + b_3\partial_x^3\]
be in $\fD_3$.
Then
\begin{equation}\label{eq:d3mult}
\begin{split}
AB &= BA = (1 + a_1\partial_x + a_2\partial_x^2 + a_3\partial_x^3)\cdot (1 + b_1\partial_x + b_2\partial_x^2 + b_3\partial_x^3)\\
&= 1 + (a_1 + b_1)\partial_x + (a_2 + a_1 b_1 + b_2)\partial_x^2 + (a_3 + a_2 b_1 + a_1 b_2 + b_3)\partial_x^3
\end{split}
\end{equation}
since derivatives $\partial^i$ with $i\geq 4$ are the zero operators on $\rset[x]_{\leq 3}$.
Hence, $(\fD_3,\,\cdot\,)$ is a commutative semi-group with neutral element $\one = 1$.

We will now see that $\fD_3$ is even a commutative group.
For that it is sufficient to find for any $A\in\fD_3$ a $B\in\fD_3$ with $AB= \one$.
By (\ref{eq:d3mult}) $AB = \one$ is equivalent to
\begin{align*}
0 &= a_1 + b_1 &&\Rightarrow\quad b_1 = -a_1\\
0 &= a_2 + a_1 b_1 + b_2 &&\Rightarrow\quad b_2 = -a_2 + a_1^2\\
0 &= a_3 + a_2 b_1 + a_1 b_2 + b_3 &&\Rightarrow\quad b_3 = -a_3 + 2a_2 a_1 - a_1^3,
\end{align*}
i.e., every $A\in\fD_3$ has the unique inverse
\[A^{-1} = 1 - a_1\partial_x + (-a_2 + a_1^2)\partial_x^2 + (-a_3 + 2a_2 a_1 - a_1^3)\partial_x^3 \quad\in\fD_3.\]
Hence, $(\fD_3,\,\cdot\,)$ is a commutative group.
\exmsymbol
\end{exm}

We have seen in the previous example that $(\fD_d,\,\cdot\,)$ for $n=1$ and $d=3$ is a commutative group.
This holds for all $n\in\nset$ and $d\in\nset_0$.

\begin{lem}\label{lem:ddgroup}
Let $n\in\nset$ and $d\in\nset_0$. Then $(\fD_d,\,\cdot\,)$ is a commutative group.
\end{lem}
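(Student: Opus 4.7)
The plan is to verify the four group axioms (closure, associativity, identity, inverses) plus commutativity by reducing everything to the facts that partial derivatives commute and that on $\rset[x_1,\dots,x_n]_{\leq d}$ the operator $\partial^\beta$ vanishes as soon as $|\beta|>d$. The example above (\Cref{exm:n1d3}) already carries out the computation in the case $n=1$, $d=3$, so the task is just to lift that argument to arbitrary $n$ and $d$.

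First I would fix the form: every $A\in\fD_d$ can be written as $A=\one+N_A$ where $N_A=\sum_{1\le|\alpha|\le d}a_\alpha\partial^\alpha$. For closure and commutativity, let $A=\one+N_A$, $B=\one+N_B$ and expand
\[
AB=\one+N_A+N_B+N_A N_B,
\]
using $\partial^\alpha\partial^\beta=\partial^{\alpha+\beta}$ and discarding all multi-indices of length $>d$ (these act as $0$ on $\rset[x_1,\dots,x_n]_{\leq d}$). This expansion shows that $AB$ again has the form $\one+(\text{terms of order }1\text{ to }d)$, so $AB\in\fD_d$; moreover $N_A N_B=N_B N_A$ because $\partial^\alpha\partial^\beta=\partial^\beta\partial^\alpha$, giving commutativity. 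Associativity is inherited from composition of linear operators, and $\one$ is the identity.

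The only nontrivial step is the existence of inverses, and this is where the truncation does the work. The key observation is that $N_A$ is nilpotent on $\rset[x_1,\dots,x_n]_{\leq d}$: any term in $N_A^{\,d+1}$ is a scalar multiple of some $\partial^{\alpha_1+\cdots+\alpha_{d+1}}$ with $|\alpha_i|\ge 1$, hence of total order $\ge d+1$, hence zero on $\rset[x_1,\dots,x_n]_{\leq d}$. Therefore the geometric series
\[
B:=\sum_{k=0}^{d}(-N_A)^k
\]
is a finite sum lying in $\fD_d$ (the constant term is $1$, all other terms are derivatives of order $1$ to $d$), and a direct telescoping gives $AB=(\one+N_A)\sum_{k=0}^{d}(-N_A)^k=\one-(-N_A)^{d+1}=\one$. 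Hence $A^{-1}=B\in\fD_d$ exists and is unique.

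The main obstacle, insofar as there is one, is bookkeeping: one must carefully confirm that truncating products by the relation $\partial^\beta=0$ for $|\beta|>d$ is compatible with composition (which is obvious since composition of operators on $\rset[x_1,\dots,x_n]_{\leq d}$ is well-defined) and that the constant coefficient $1$ is preserved under both multiplication and inversion. Both points follow immediately from the displayed formulas above, so no further case analysis is needed, and the argument is uniform in $n$ and $d$. Alternatively, one could invoke \Cref{lem:expLog}(ii) applied to the nilpotent operator $N_A$ to exhibit $A=\exp(\log A)$ and deduce invertibility, but the direct geometric series is shorter and stays inside $\fD_d$ manifestly.
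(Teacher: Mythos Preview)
Your proof is correct, but it takes a different route from the paper. The paper writes out the coefficient of $\partial^\gamma$ in a product $AB$ as the convolution $c_\gamma=\sum_{\alpha+\beta=\gamma}a_\alpha b_\beta$ and then solves the triangular system $c_\gamma=\delta_{\gamma,0}$ for the unknown $b_\gamma$'s by induction on $|\gamma|$, obtaining an explicit recursion $b_\gamma=-\sum_{0\neq\alpha\preceq\gamma}a_\alpha b_{\gamma-\alpha}$. You instead observe that $N_A=A-\one$ is nilpotent of index at most $d+1$ and write down the inverse in closed form as the truncated geometric series $\sum_{k=0}^d(-N_A)^k$. Your argument is shorter and more conceptual; the paper's recursive formula, on the other hand, makes the polynomial dependence of each coefficient of $A^{-1}$ on the coefficients of $A$ explicit, which the paper uses again later when checking that $(A,B)\mapsto AB^{-1}$ is smooth.
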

\begin{proof}
Let $A = \sum_{\alpha: |\alpha|\leq d} a_\alpha\partial^\alpha, B = \sum_{\beta: |\beta|\leq d} b_\beta\partial^\beta\in\fD_d$, i.e., $a_0 = b_0 =1$.
Then
\[AB = C = \sum_{\gamma: |\gamma|\leq d} c_\gamma\cdot\partial^\gamma\]
holds with
\begin{equation}\label{eq:inverseSystem}
c_\gamma = \sum_{\alpha,\beta\in\nset_0^n: \alpha + \beta = \gamma} a_\alpha b_\beta.
\end{equation}
Let $\alpha = (\alpha_1,\dots,\alpha_n)\succeq\beta = (\beta_1,\dots,\beta_n)$ on $\nset_0^n$ if and only if $\alpha_i \geq \beta_i$ for all $i=1,\dots,n$.
Then (\ref{eq:inverseSystem}) can be solved by induction on $|\gamma|$.
For $|\gamma|=0$ we have $c_0 = a_0\cdot b_0$ and $a_0 = c_0 = 1$, i.e., $b_0 = 1$.
So assume (\ref{eq:inverseSystem}) is solved for all $c_\gamma$ with $|\gamma|\leq d-1$.
Then for any $\gamma\in\nset_0$ with $|\gamma|=d$ we have
\begin{equation}\label{eq:inverseA}
b_\gamma = a_0 b_\gamma = -\sum_{\alpha\in\nset_0\setminus\{0\}: \gamma\succeq\alpha} a_\alpha\cdot b_{\gamma-\alpha},
\end{equation}
i.e., the system (\ref{eq:inverseSystem}) of equations has a unique solution gained by induction.
Hence, for every $A\in\fD_d$ there exists a unique $B\in\fD_d$ with $AB = BA = \one$.
\end{proof}

From \Cref{lem:ddgroup} we have seen that $(\fD_d,\,\cdot\,)$ for any $n\in\nset$ and $d\in\nset_0$ is a commutative group.
Let
\begin{equation}\label{eq:iota}
\iota_d: \{1\}\times\rset^{\binom{n+d}{n}-1}\to\fD_d,\quad (a_\alpha)_{\alpha\in\nset_0^n: |\alpha|\leq d} \mapsto \sum_{\alpha\in\nset_0^n: |\alpha|\leq d} a_\alpha\cdot\partial^\alpha
\end{equation}
be an affine linear map.
Then $\iota_d$ in (\ref{eq:iota}) a diffeomorphism and it is a coordinate map for $\fD_d$.
The smooth manifold $\{1\}\times\rset^{\binom{n+d}{n}-1}$ inherits the group structure of $\fD_d$ through $\iota_d$, i.e.,
\begin{equation}
(\fD_d,\,\cdot\,)\; \overset{\iota_d}{\cong}\; \left(\{1\}\times\rset^{\binom{n+d}{n}-1},\odot\right)
\end{equation}
Hence, the map $\iota_d$ shows the following.

\begin{thm}
Let $n\in\nset$ and $d\in\nset_0$. Then $(\fD_d,\,\cdot\,)$ is a Lie group.
\end{thm}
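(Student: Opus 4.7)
The plan is to verify the three defining properties of a (finite-dimensional) Lie group by transporting everything to coordinates via the already-constructed diffeomorphism $\iota_d$.

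First, I would endow $\fD_d$ with its smooth manifold structure in the obvious way: the affine linear bijection $\iota_d$ in (\ref{eq:iota}) identifies $\fD_d$ with $\{1\}\times\rset^N$, where $N=\binom{n+d}{n}-1$, and the inherited atlas (a single global chart) makes $\fD_d$ a smooth manifold of dimension $N$. Since the group property was already established in \Cref{lem:ddgroup}, the only remaining task is to show that the map $\fD_d\times\fD_d\to\fD_d$, $(A,B)\mapsto AB^{-1}$, is smooth in these coordinates.

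Next I would split this into the standard two steps, multiplication and inversion. For multiplication, the formula $c_\gamma=\sum_{\alpha+\beta=\gamma}a_\alpha b_\beta$ from (\ref{eq:inverseSystem}) exhibits the coefficients of $AB$ as bilinear polynomial expressions in the coefficients of $A$ and $B$. Hence in coordinates multiplication is a polynomial map $\rset^N\times\rset^N\to\rset^N$, which is smooth. For inversion, I would invoke the inductive formula (\ref{eq:inverseA}), which expresses each $b_\gamma$ of $A^{-1}$ as a polynomial in the previously computed $b_\delta$ (with $|\delta|<|\gamma|$) and the coefficients $a_\alpha$ of $A$; importantly, no division occurs because we always have the normalization $a_0=1$. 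Unwinding this induction shows that the coordinates of $A^{-1}$ are polynomial in the coordinates of $A$, so inversion is smooth. Composing the two polynomial maps gives smoothness of $(A,B)\mapsto AB^{-1}$.

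There is no real obstacle here: the key observation is that the requirement $q_0=1$ (built into the definition of $\fD$) removes the only place a denominator could appear in the inversion formula, so every structure map is not merely smooth but in fact polynomial in the chart $\iota_d$. The same strategy will later have to be re-examined when passing to the infinite-dimensional regular Fr\'echet Lie group $\fD$, where one must also verify the $C^1$-curve condition (vi) of \Cref{dfn:frechetLieGroup}; in the present finite-dimensional truncation this is automatic.
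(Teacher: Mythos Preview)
Your proposal is correct and follows essentially the same approach as the paper: use the global chart $\iota_d$ to make $\fD_d$ a smooth manifold, and then read off smoothness of $(A,B)\mapsto AB^{-1}$ from the polynomial formulas (\ref{eq:inverseSystem}) and (\ref{eq:inverseA}). Your write-up is in fact more detailed than the paper's (you explicitly separate multiplication and inversion and note why the normalization $a_0=1$ prevents any division), but the underlying argument is identical.
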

\begin{proof}
The map $\iota_d$ in (\ref{eq:iota}) is a diffeomorphism between $\fD_d$ and $\{1\}\times\rset^{\binom{n+d}{n}-1}$.
Hence, $\fD_d$ is a differentiable manifold which possesses the group structure $(\fD_d,\,\cdot\,)$.
By (\ref{eq:inverseSystem}) and (\ref{eq:inverseA}) we have that the map
\[\fD_d\times \fD_d\to\fD_d,\quad (A,B)\mapsto AB^{-1}\]
is smooth.
Hence, $(\fD_d,\,\cdot\,)$ is a commutative Lie group.
\end{proof}

\subsection{The Lie Algebra $\fd_d$ of $\fD_d$}
\label{sec:liealgebra}

Since every $A\in\fD_d$ is a linear map
\[A:\rset[x_1,\dots,x_n]_{\leq d}\to\rset[x_1,\dots,x_n]_{\leq d}\]
between finite-dimensional vector spaces we can choose a basis of $\rset[x_1,\dots,x_n]_{\leq d}$ and get a matrix representation $\tilde{A}$ of $A$.
Take the monomial basis of $\rset[x_1,\dots,x_n]_{\leq d}$.
Then $\tilde{A}$ is an upper triangular matrix with diagonal entries $1$.

\begin{exm}[\Cref{exm:n1d3} continued]\label{exm:n1d3cont}
Let $n=1$ and $d=3$.
Then every $A = 1 + a_1\partial_x + a_2\partial_x^2 + a_3\partial_x^3\in\fD_3$ has with the monomial basis $\{1,x,x^2,x^3\}$ of $\rset[x]_{\leq 3}$ the matrix representation
\[\tilde{A} = \begin{pmatrix}
1 & a_1 & 2a_2 & 6a_3\\
0 & 1 & 2a_1 & 6a_2\\
0 & 0 & 1 & 3a_1\\
0 & 0 & 0 & 1
\end{pmatrix}\]
and we therefore set
\[\tilde{\fD}_3 := \left\{ \begin{pmatrix}
1 & a_1 & 2a_2 & 6a_3\\
0 & 1 & 2a_1 & 6a_2\\
0 & 0 & 1 & 3a_1\\
0 & 0 & 0 & 1
\end{pmatrix} \,\middle|\, a_1,a_2,a_3\in\rset \right\}.\]
Hence, $(\tilde{A}-\id)^4 = 0$ as a matrix and also $(A-\one)^4 = 0$ as an operator on $\rset[x]_{\leq 3}$.
From \Cref{lem:expLog} we find that the matrix valued exponential map
\[\exp:\mathrm{gl}(4,\cset)\to\mathrm{Gl}(4,\cset),\quad \tilde{A}\mapsto \exp(\tilde{A}) := \sum_{k\in\nset_0} \frac{\tilde{A}^k}{k!}\]
is surjective and the logarithm
\[\tilde{A}\mapsto \log \tilde{A} := -\sum_{k\in\nset} \frac{(\id-\tilde{A})^k}{k}\]
is well-defined for all $\id + N\in\mathrm{Gl}(4,\cset)$ with $N$ nilpotent.
Since $\tilde{\fD}_3\subseteq \mathrm{Gl}(4,\cset)$ with $(\id-\tilde{A})^4 = 0$ for all $\tilde{A}\in\tilde{\fD}_3$ we have
\begin{equation}\label{eq:logA}
\log:\tilde{\fD}_3\to\mathrm{gl}(4,\cset), \tilde{A}\mapsto\log \tilde{A} = -\sum_{k=1}^3 \frac{(\id-\tilde{A})^k}{k}.
\end{equation}
Since also $(A-\one)^4 = 0$ for all $A\in\fD_3$ we can use (\ref{eq:logA}) also for the differential operators in $\fD_3$:
\[\log:\fD_3\to\left\{d_0 + d_1\partial_x + d_2\partial_x^2 + d_3\partial_x^3 \,\middle|\, d_0,\dots,d_3\in\rset\right\},\quad A\mapsto -\sum_{k=1}^3 \frac{(\one-A)^k}{k}.\]
To determine the image $\log\fD_3$ recall that also $\log$ is an injective map by \Cref{lem:expLog} and hence $\log\fD_3$ is $3$-dimensional with $d_0 = 0$, i.e., we have
\[\log\fD_3 = \left\{d_1\partial_x + d_2\partial_x^2 + d_3\partial_x^3 \,\middle|\, d_1,d_2,d_3\in\rset\right\} =: \fd_3.\]
In summary, since $A^4 = 0$ for all $A\in\fd_3$ we have that
\begin{equation}\label{eq:df4exp}
\exp:\fd_3\to\fD_3,\quad A\mapsto \sum_{k=0}^3 \frac{A^k}{k!}
\end{equation}
is surjective with inverse
\begin{equation}\label{eq:df4log}
\log:\fD_3\to\fd_3,\quad A\mapsto -\sum_{k=1}^3 \frac{(\one-A)^k}{k}.
\end{equation}
Therefore, $\fd_3$ is the Lie algebra of $\fD_3$ and $\exp$ in (\ref{eq:df4exp}) is the exponential map between the Lie algebra $\fd_3$ and its Lie group $\fD_3$ with inverse $\log$ in (\ref{eq:df4log}).
\exmsymbol
\end{exm}

The previous example of the Lie algebra $\fd_3$ of the Lie group $\fD_3$ holds for all $n\in\nset$ and $d\in\nset_0$.
We define the following.

\begin{dfn}\label{dfn:dd}
Let $n\in\nset$ and $d\in\nset_0$.
We define
\[\fd_d := \left\{\sum_{\alpha\in\nset_0^n\setminus\{0\}: |\alpha|\leq d} d_\alpha\cdot\partial^\alpha \,\middle|\, d_\alpha\in\rset\ \text{for all}\ \alpha\in\nset_0^n\setminus\{0\}\ \text{with}\ |\alpha|\leq d\right\}.\]
\end{dfn}

It is clear that $(\fd_d,\,\cdot\,,+)$ is an algebra on $\rset[x_1,\dots,x_n]_{\leq d}$ and we have the following.

\begin{thm}\label{thm:liealgebrad}
Let $n\in\nset$ and $d\in\nset_0$.
Then $(\fd_d,\,\cdot\,,+)$ is the Lie algebra of the Lie group $(\fD_d,\,\cdot\,)$ with exponential map
\[\exp: \fd_d\to\fD_d,\quad A\mapsto \sum_{k=0}^d \frac{A^k}{k!}\]
with inverse
\[\log:\fD_d\to\fd_d,\quad A\mapsto -\sum_{k=1}^d \frac{(\one-A)^k}{k}.\]
\end{thm}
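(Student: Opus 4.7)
The plan is to reduce everything to the finite-dimensional matrix setting of \Cref{lem:expLog} by exploiting the fact that on $\rset[x_1,\dots,x_n]_{\leq d}$ every element of $\fd_d$ acts nilpotently. Concretely, pick the monomial basis of $\rset[x_1,\dots,x_n]_{\leq d}$ ordered by total degree and let $N := \binom{n+d}{n}$; this gives matrix representations
\[
\tilde{\cdot}\colon \fD_d\hookrightarrow \mathrm{Gl}(N,\cset),\qquad \tilde{\cdot}\colon \fd_d\hookrightarrow \mathrm{gl}(N,\cset).
\]
Because each $\partial^\alpha$ with $|\alpha|\geq 1$ strictly lowers total degree, the matrix of every $D\in\fd_d$ is strictly upper triangular, hence $\tilde D^{d+1}=0$, and the matrix of every $A\in\fD_d$ is upper triangular with $1$'s on the diagonal, hence $(\id-\tilde A)^{d+1}=0$. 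This is the core observation generalising the computation done for $n=1,d=3$ in \Cref{exm:n1d3cont}.

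Once this nilpotency is in hand, the formal series in \Cref{lem:expLog} truncate at level $d$ and all convergence issues vanish. The first step is then to verify that the truncated exponential $\exp(D)=\sum_{k=0}^{d} D^k/k!$ maps $\fd_d$ into $\fD_d$: the constant term of $\exp(D)$ equals $1$ (only the $k=0$ summand contributes to the $\partial^0$-coefficient since $D$ has no constant term), and any resulting $\partial^\alpha$ with $|\alpha|>d$ vanishes as an operator on $\rset[x_1,\dots,x_n]_{\leq d}$, so $\exp(D)\in\fD_d$. Analogously $\log(A)=-\sum_{k=1}^{d}(\one-A)^k/k$ lies in $\fd_d$ for every $A\in\fD_d$, because $\one-A$ itself has no constant term. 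Applying \Cref{lem:expLog}(ii) to the matrix representatives yields $\exp(\log \tilde A)=\tilde A$ and, since $\log$ is the formal inverse of $\exp$ as a power series identity on nilpotent arguments, also $\log(\exp \tilde D)=\tilde D$; transferring back to operators gives that $\exp$ and $\log$ are mutually inverse bijections between $\fd_d$ and $\fD_d$. Smoothness of both maps is immediate because they are polynomial expressions in the coefficients via the coordinate chart $\iota_d$ from \Cref{sec:liegroup}.

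It remains to identify $\fd_d$ with the Lie algebra $T_\one\fD_d$ and to check the defining derivative property. Via the diffeomorphism $\iota_d$, the tangent space $T_\one\fD_d$ consists of all velocity vectors $\dot c(0)$ of smooth curves $c\colon(-\varepsilon,\varepsilon)\to\fD_d$ with $c(0)=\one$; since such a curve has the form $\one+\sum_{0<|\alpha|\leq d} c_\alpha(t)\partial^\alpha$ with $c_\alpha(0)=0$, its derivative at $0$ is exactly an element of $\fd_d$, and every element of $\fd_d$ arises this way from the linear curve $t\mapsto \one + tD$. For the exponential property one computes
\[
\left.\frac{\diff}{\diff t}\right|_{t=0}\exp(tD)=\left.\frac{\diff}{\diff t}\right|_{t=0}\sum_{k=0}^{d}\frac{t^k D^k}{k!}=D,
\]
as required.

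The main obstacle I anticipate is essentially bookkeeping: one must be careful that the truncation of the exponential and logarithm series at $k=d$ is legitimate not merely on the matrix side but also as identities of differential operators on $\rset[x_1,\dots,x_n]_{\leq d}$, and that cross-terms $\partial^\beta$ with $|\beta|>d$ generated when multiplying summands really do vanish as operators. Once the matrix embedding is set up and the nilpotency index $d+1$ is established, however, all remaining verifications are routine applications of \Cref{lem:expLog} and of the formal power series identity $\exp\circ\log=\mathrm{id}$ on nilpotent arguments.
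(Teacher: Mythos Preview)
Your proposal is correct and follows essentially the same approach as the paper: the paper's proof simply reads ``Follows from \Cref{lem:expLog} similar to \Cref{exm:n1d3cont}'', and you have spelled out precisely that argument---passing to the matrix representation in the monomial basis, using the nilpotency of $\tilde D$ (resp.\ $\id-\tilde A$) to truncate the series at level $d$, and then invoking \Cref{lem:expLog}(ii). Your additional verification of the tangent-space identification and the derivative property is more explicit than the paper but entirely in the same spirit.
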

\begin{proof}
Follows from \Cref{lem:expLog} similar to \Cref{exm:n1d3cont}.
\end{proof}

\subsection{The Regular Fr\'echet Lie Group $\fD$ and its Lie Algebra $\fd$}
\label{sec:frechetLieGroupD}

In \Cref{sec:liegroup} and \ref{sec:liealgebra} we have seen that $(\fD_d,\,\cdot\,)$ is a Lie group with Lie algebra $(\fd_d,\,\cdot\,,+)$ for all $d\in\nset_0$.
Hence, similar to \Cref{dfn:dd} we define the following.

\begin{dfn}
Let $n\in\nset$. We define
\[\fd := \left\{ \sum_{\alpha\in\nset_0^n\setminus\{0\}} d_\alpha\cdot\partial^\alpha \,\middle|\, d_\alpha\in\rset\ \text{for all}\ \alpha\in\nset_0^n\setminus\{0\}\right\}.\]
\end{dfn}

For $\fD$ and $\fd$ we have the following.

\begin{thm}\label{thm:liealgebra}
Let $n\in\nset$.
Then the following hold:
\begin{enumerate}[(i)]
\item $(\fd,\,\cdot\,,+)$ is a commutative algebra.

\item $(\fD,\,\cdot\,)$ is a commutative group.

\item The map
\[\exp:\fd\to\fD,\quad A\mapsto \sum_{k\in\nset_0} \frac{A^k}{k!}\]
is bijective.

\item The map
\[\log:\fD\to\fd,\quad A\mapsto -\sum_{k\in\nset} \frac{(\one-A)^k}{k}\]
is bijective.

\item The maps $\exp:\fd\to\fD$ and $\log:\fD\to\fd$ are inverse to each other.
\end{enumerate}
\end{thm}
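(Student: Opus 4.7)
The plan is to view $\fD$ and $\fd$ as inverse limits of the finite-dimensional Lie groups $\fD_d$ and Lie algebras $\fd_d$ from \Cref{thm:liealgebrad}, taken in the Fr\'echet topology of \Cref{exm:powerSeriesTop}. The projection $\fD\to\fD_d$ (and analogously $\fd\to\fd_d$) is the truncation $\sum_\alpha a_\alpha\partial^\alpha\mapsto\sum_{|\alpha|\leq d}a_\alpha\partial^\alpha$, which is the same as restricting an operator to $\rset[x_1,\dots,x_n]_{\leq d}$ because $\partial^\beta$ vanishes on $\rset[x_1,\dots,x_n]_{\leq d}$ whenever $|\beta|>d$. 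Under this truncation the algebraic operations and the series defining $\exp$ and $\log$ are all compatible level by level.

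For (i) and (ii), sums and Cauchy products of formal power series in $\partial_1,\dots,\partial_n$ are defined coordinate-wise, and commutativity is immediate from $\partial_i\partial_j=\partial_j\partial_i$; this settles (i). For (ii), given $A=\sum_\alpha a_\alpha\partial^\alpha\in\fD$, the equation $AB=\one$ amounts to $\sum_{\alpha+\beta=\gamma}a_\alpha b_\beta=\delta_{\gamma,0}$, a lower-triangular system in the index $|\gamma|$ that I would solve by induction on $|\gamma|$ exactly as in the proof of \Cref{lem:ddgroup}, yielding a unique $B\in\fD$.

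The substance lies in (iii)--(v). The key observation is that every $A\in\fd$ has no constant term, so $A^k$ is supported on multi-indices with $|\alpha|\geq k$; consequently, for each fixed $\gamma\in\nset_0^n$ only the finitely many summands with $k\leq|\gamma|$ contribute to the coefficient of $\partial^\gamma$ in $\sum_{k\in\nset_0} A^k/k!$. Hence $\exp(A)$ is well-defined as a formal power series with constant term $1$, so $\exp(A)\in\fD$. The same argument applied to $\one-A$ for $A\in\fD$ yields $\log(A)\in\fd$, since $\one-A$ also has no constant term.

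Finally, for bijectivity and the inverse relationship, I would use compatibility with truncation: writing $A_d$ for the image of $A\in\fd$ in $\fd_d$, the operators $\exp(A)$ and $\exp(A_d)$ agree on $\rset[x_1,\dots,x_n]_{\leq d}$ (all derivatives of order greater than $d$ vanish there), and the analogous statement holds for $\log$. \Cref{thm:liealgebrad} already asserts that $\exp$ and $\log$ are mutually inverse bijections between $\fd_d$ and $\fD_d$ at every level $d$. Equality of coefficients is equivalent to equality at every truncation level under the Fr\'echet topology, so $\exp\circ\log=\id_\fD$ and $\log\circ\exp=\id_\fd$ follow, giving (iii), (iv), and (v) simultaneously. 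I expect the only real subtlety to be precisely this passage from the level-wise bijections to a bijection of the full spaces, but it is forced by the coordinate-wise nature of the topology.
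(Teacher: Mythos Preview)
Your proposal is correct and follows essentially the same approach as the paper: well-definedness of $\exp$ and $\log$ via the observation that $A^k$ (resp.\ $(\one-A)^k$) is supported on $|\alpha|\geq k$, and the inverse relationship by restricting to $\rset[x_1,\dots,x_n]_{\leq d}$ and invoking \Cref{thm:liealgebrad} level by level. Your inverse-limit language is a mild repackaging of the paper's argument, which phrases the same reduction as evaluating $\exp(\log A)p$ on a polynomial $p$ of degree $d$ and noting this equals $\exp_d(\log_d A)p = Ap$.
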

\begin{proof}
(i): That is clear.

(ii): That $A\cdot B = B\cdot A$ holds for all $A,B\in\fD$ is clear.
The inverse of $A\in\fD$ is uniquely determined by solving (\ref{eq:inverseSystem}) to get (\ref{eq:inverseA}) for all $\gamma\in\nset_0^n$.
This is a formal power series argument with coordinate-wise convergence (as in the Fr\'echet topology, \Cref{exm:powerSeriesTop}).

(iii): At first we show that $\exp:\fd\to\fD$ is well-defined.
To see this note that for any $A\in\fd$ we have
\[A^k = \sum_{\alpha\in\nset_0^n: |\alpha|\geq k} c_\alpha\cdot\partial^\alpha,\]
i.e., $A^k$ contains no differential operators of order $\leq k-1$.
Hence, the sum
\[\sum_{k=0}^K \frac{A^k}{k!} = \sum_{\alpha\in\nset_0^n} c_{K,\alpha}\cdot\partial^\alpha \]
converges coefficient-wise to
\[\exp A =\sum_{k\in\nset_0} \frac{A^k}{k!} = \sum_{\alpha\in\nset_0^n} c_\alpha\cdot\partial^\alpha,\]
i.e., in the Fr\'echet topology of $\fD\subsetneq\rset[[\partial_1,\dots,\partial_n]] \cong \rset[[x_1,\dots,x_n]]$, see \Cref{exm:powerSeriesTop}.
In other words, the coefficients $c_\alpha$ depend only on $A^k$ for $k=0,\dots,|\alpha|$, we therefore have $c_{K,\alpha} = c_\alpha$ for all $K>|\alpha|$, and hence $\exp A\in\fD$ is well-defined.
With that we have $\exp\fd\subseteq\fD$.
For equality we give the inverse map in (v).

(iv): To show that $\log:\fD\to\fd$ is well-defined the same argument as in (iii) holds for $(\one-A)^k$ with $A\in\fD$.
It shows that $\log A\in\fd$ for all $A\in\fD$ is well-defined and we have $\log\fD\subseteq\fd$.

(v): To prove that $\exp$ and $\log$ are inverse to each other we remark
\[\rset[x_1,\dots,x_n] = \bigcup_{d\in\nset_0} \rset[x_1,\dots,x_n]_{\leq d}.\]
Define
\[\exp_d A := \sum_{k=0}^d \frac{A^k}{k!} \quad\text{and}\quad \log_d A := -\sum_{k=1}^d \frac{(\one-A)^k}{k}.\]
Then for every $p\in\rset[x_1,\dots,x_n]$ with $d = \deg p$ we have
\[\exp(\log A)p = \exp_d(\log_d A)p = Ap\]
for all $A\in\fd$ by \Cref{thm:liealgebrad}, i.e., $\exp(\log A) = A$ for all $A\in\fD$.
Similarly, we have $\log(\exp A)p = Ap$ for all $A\in\fd$.
This also shows the remaining assertions $\exp(\fd) = \fD$ and $\log \fD = \fd$ from (iii) and (iv).
\end{proof}

For $\rset[[\partial_1,\dots,\partial_n]]\supseteq\fd,\fD$ being a Fr\'echet space with the before mentioned topology (of coordinate-wise convergence, \Cref{exm:powerSeriesTop}) we have the following.

\begin{cor}\label{cor:lieprops}
Let $n\in\nset$ and let $\fd,\fD\subseteq\rset[[\partial_1,\dots,\partial_n]]$ be Fr\'echet spaces (equipped with the coordinate-wise convergence).
Then the following hold:
\begin{enumerate}[(i)]
\item $\fD\times\fD\to\fD$, $(A,B)\mapsto AB^{-1}$ is smooth.

\item $\exp:\fd\to\fD$ is smooth and
\[\left.\frac{\diff}{\diff t}\right|_{t=0} \exp(t u) = u\]
holds for all $u\in\fg$.

\item $\log:\fd\to\fD$ is smooth.
\end{enumerate}
\end{cor}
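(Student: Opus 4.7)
The plan is to reduce each of (i)--(iii) to a coordinate-wise check, exploiting the observation made just before \Cref{dfn:frechetLieGroup}: on the Fr\'echet space $\rset[[\partial_1,\dots,\partial_n]]$ (and hence on its subsets $\fd$, $\fD$) a map is $C^m$ if and only if each of its coefficient functions is $C^m$. Using the coordinates $(a_\alpha)_{\alpha\neq 0}$ on $\fD$ and on $\fd$ given by the evident extension of the diffeomorphism $\iota_d$ from \Cref{sec:liegroup}, it therefore suffices in each case to show that every output coordinate is a polynomial in finitely many input coordinates.

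For (i), I would first show by induction on $|\gamma|$ that the recursion (\ref{eq:inverseA}) expresses the coefficient $b_\gamma$ of $A^{-1}$ as a polynomial in the finitely many coefficients $a_\alpha$ with $|\alpha|\leq|\gamma|$; this uses nothing more than the fact that on the right-hand side of (\ref{eq:inverseA}) all multi-indices $\gamma-\alpha$ have strictly smaller modulus than $\gamma$. Substituting back into (\ref{eq:inverseSystem}) then exhibits the $\partial^\gamma$-coefficient of $AB^{-1}$ as a polynomial in the coordinates of $A$ and $B$ of order $\leq|\gamma|$, which is visibly smooth. This is the main computational input; once in place, smoothness of $(A,B)\mapsto AB^{-1}$ is immediate.

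For (ii) and (iii), I would reuse the truncation observation from the proof of \Cref{thm:liealgebra}(iii): because $A\in\fd$ forces $A^k$ to contain no summand $\partial^\alpha$ with $|\alpha|<k$, the $\partial^\alpha$-coefficient of $\exp A$ coincides with the $\partial^\alpha$-coefficient of the finite polynomial $\exp_{|\alpha|} A = \sum_{k=0}^{|\alpha|} A^k/k!$, and analogously the $\partial^\alpha$-coefficients of $\log A$ and $\log_{|\alpha|} A$ agree since $\one-A\in\fd$ whenever $A\in\fD$. Both are therefore polynomial, and hence smooth, in the finitely many relevant input coordinates. The derivative identity in (ii) follows coordinate-wise: the $\partial^\alpha$-coefficient of $\exp(tu)$ is a polynomial in $t$ whose $t$-coefficient is precisely the $\partial^\alpha$-coefficient of $u$ (and whose constant coordinate for $\alpha=0$ is the identically $1$ function, contributing nothing), so $\frac{\diff}{\diff t}|_{t=0}\exp(tu)=u$ holds in $\fd$.

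There is no genuine obstacle; the only conceptual step is the reduction to finite-coordinate polynomial dependence, which is exactly what the coordinate-wise Fr\'echet topology on $\rset[[\partial_1,\dots,\partial_n]]$ was introduced to allow, and which the structure of $\fd$ together with the recursion (\ref{eq:inverseA}) supplies.
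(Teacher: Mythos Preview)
Your proposal is correct and follows essentially the same route as the paper's own proof: both reduce each of (i)--(iii) to showing that every output coordinate depends polynomially on finitely many input coordinates, invoking (\ref{eq:inverseSystem}) and (\ref{eq:inverseA}) for (i) and the truncation observation from the proof of \Cref{thm:liealgebra}(iii)--(iv) for (ii) and (iii). Your treatment of the derivative identity in (ii) is slightly more explicit than the paper's ``follows by direct calculations,'' but the underlying argument is identical.
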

\begin{proof}
(i): Let $A = \sum_{\alpha\in\nset_0^n} a_\alpha\partial^\alpha$ and $B = \sum_{\alpha\in\nset_0^n} b_\alpha\partial^\alpha$ with $a_0 = b_0 = 1$. From (\ref{eq:inverseSystem}) we see that the multiplication is smooth since every coordinate $c_\gamma$ of the product $AB = \sum_{\alpha\in\nset_0^n} c_\alpha\partial^\alpha$ is a polynomial in $a_\alpha$ and $b_\alpha$ with $|\alpha|\leq |\gamma|$.
The inverse $B^{-1} = \sum_{\alpha\in\nset_0^n} d_\alpha\partial^\alpha$ is smooth because of (\ref{eq:inverseA}), i.e., also the coefficients $d_\gamma$ of the inverse depend polynomially on the coefficients $b_\alpha$ with $|\alpha|\leq |\gamma|$.

(ii): In the proof of \Cref{thm:liealgebra} (iii) we have already seen that the coefficients $c_\gamma$ of $\exp A = \sum_{\alpha\in\nset_0^n} c_\alpha\partial^\alpha$ depend polynomially on the coefficients $a_\alpha$ of $A = \sum_{\alpha\in\nset_0^n\setminus\{0\}} a_\alpha\partial^\alpha$ with $|\alpha|\leq |\gamma|$. 

The condition
\[\left.\frac{\diff}{\diff t}\right|_{t=0} \exp(t u) = u\]
then follows by direct calculations.

(iii): Follows like (ii) from \Cref{thm:liealgebra} (iv).
\end{proof}

It is easy to see that $\fd$ and $\fD$ are both infinite dimensional smooth (Fr\'echet) manifolds.
Hence, summing everything up we have the following.

\begin{thm}\label{thm:mainFrechetLieGroups}
Let $n\in\nset$.
Then $(\fD,\,\cdot\,)$ as a Fr\'echet space is a commutative regular Fr\'echet Lie group with the commutative Fr\'echet Lie algebra $(\fd,\,\cdot\,,+)$.
The exponential map
\[\exp:\fd\to\fD,\quad A\mapsto \sum_{k\in\nset_0} \frac{A^k}{k!}\]
is smooth and bijective with the smooth and bijective inverse
\[\log:\fD\to\fd,\quad A\mapsto -\sum_{k\in\nset} \frac{(\one-A)^k}{k}.\]
\end{thm}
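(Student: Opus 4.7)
The plan is to verify the six conditions of Definition~\ref{dfn:frechetLieGroup} by assembling the preceding results. Items (ii) and (iii) follow directly: the commutative group structure is Theorem~\ref{thm:liealgebra}(ii), while smoothness of $(A,B) \mapsto AB^{-1}$ is Corollary~\ref{cor:lieprops}(i). For item (i), I would extend the affine coordinate maps $\iota_d$ from (\ref{eq:iota}) to a single global chart
\[
\iota: \fd \to \fD,\qquad A \mapsto \one + A,
\]
which is an affine bijection with affine inverse $B \mapsto B - \one$. Under the coordinate-wise Fréchet topology of Example~\ref{exm:powerSeriesTop}, both $\iota$ and $\iota^{-1}$ are smooth (trivially, as affine maps). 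Since $\fd$ is itself a Fréchet space (a closed linear subspace of $\rset[[\partial_1,\dots,\partial_n]]$ with the induced coordinate-wise topology), this equips $\fD$ with the structure of an infinite-dimensional smooth Fréchet manifold modelled on $\fd$ and covered by the single global chart $\iota$.

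For item (iv), the chart $\iota$ identifies $\fD$ with $\fd$ as an affine space, so differentiation at $\one$ yields the canonical identification $T_{\one} \fD \cong \fd$ as vector spaces. The Lie bracket on $T_{\one} \fD$ vanishes because $\fD$ is commutative, and this matches the vanishing bracket on $\fd$ since all partial derivative operators commute; hence $\fd$ is the Fréchet Lie algebra of $\fD$ in the sense required. Condition (v) is exactly Corollary~\ref{cor:lieprops}(ii), together with the derivative formula $\left.\tfrac{\diff}{\diff t}\right|_{t=0} \exp(tu) = u$ established there.

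The main obstacle is item (vi), which asks for the coincidence of the manifold-theoretic notion of $C^1$-curve in $\fD$ (through the chart $\iota$) with the ambient Fréchet-differentiability of a curve $F : \rset \to \fD \subseteq \rset[[\partial_1,\dots,\partial_n]]$. The key observation, already noted in the discussion preceding Definition~\ref{dfn:frechetLieGroup}, is that the Fréchet topology on $\rset[[\partial_1,\dots,\partial_n]]$ is coordinate-wise convergence, so a curve $F(t) = \sum_\alpha F_\alpha(t) \partial^\alpha$ is $C^1$ in either sense if and only if each coordinate function $F_\alpha : \rset \to \rset$ is $C^1$. Unpacking this equivalence, combined with the fact that $\iota$ is a global diffeomorphism, completes the verification that $(\fD, \,\cdot\,)$ is a regular Fréchet Lie group. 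The closing statements on $\exp$ and $\log$---their explicit series expressions, smoothness, bijectivity, and mutual inverseness---are then just restatements of Theorem~\ref{thm:liealgebra} and Corollary~\ref{cor:lieprops}.
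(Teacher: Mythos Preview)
Your proposal is correct and follows essentially the same route as the paper: both assemble Theorem~\ref{thm:liealgebra} and Corollary~\ref{cor:lieprops} to verify the axioms of Definition~\ref{dfn:frechetLieGroup}. The only notable difference is in condition~(vi): you use the global affine chart $\iota:A\mapsto\one+A$ together with the coordinate-wise characterization of $C^1$-curves, whereas the paper's proof passes through the smooth map $\log$ to identify the derivative---and then remarks immediately after the proof that your coordinate-wise argument works just as well.
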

\begin{proof}
We have that $\fD$ is an infinite dimensional smooth manifold, $\fD$ is a Fr\'echet space (with the coefficient-wise convergence topology, \Cref{exm:powerSeriesTop}) and by \Cref{thm:liealgebra} (ii) we also have that $(\fD,\,\cdot\,)$ is a commutative group.
By \Cref{cor:lieprops} (i) we have that $(A,B)\mapsto AB^{-1}$ is continuous in the Fr\'echet topology.
Hence, $(\fD,\,\cdot\,)$ is an infinite dimensional commutative Fr\'echet Lie group.

The properties about $\exp$ and $\log$ are \Cref{thm:liealgebra} (iii)--(v).

We now prove the regularity condition (vi) in \Cref{dfn:frechetLieGroup}.
Let $F:\rset\to\fD$ be a $C^1$-differentiable function, i.e.,
\begin{equation}\label{eq:diffDef}
\lim_{n\to\infty} \left( F\!\left(t+\frac{s}{n}\right)\cdot F(t)^{-1} \right)
\end{equation}
converges uniformly on each compact interval to a one-parameter subgroup
\[\exp (s f(t))\]
where $f:\rset\to\fd$ is the derivative $\dot{F}(t)$ of $F(t)$ at $t\in\rset$, see \cite[p.\ 10]{omori97}.
But we can take the logarithm of $F$
\[\tilde{f}(t) := \log F(t)\]
for all $t\in\rset$ to see that $\tilde{f}$ is $C^1$ since $\log$ is smooth by \Cref{cor:lieprops} (iii).
By \Cref{cor:lieprops} (ii) we have $f = \tilde{f}$.
Hence, $C^1(\fD,\fd)$ coincides with the set of all $C^1$-curves in $\fD$ under the Fr\'echet topology of $\fD$.
\end{proof}

In the previous proof we can also replace (\ref{eq:diffDef}) by the fact that a function $F:\rset\to\rset[[\partial_1,\dots,\partial_n]]$, $t\mapsto F(t) = \sum_{\alpha\in\nset_0^n} F_\alpha(t)\cdot\partial^\alpha$ is $C^m$ for some $m\in\nset_0$ if and only if every coordinate function $F_\alpha$ is $C^m$.

\subsection{L\'evy--Khinchin Formula}

A measure $\mu$ on $\rset^n$ is called \emph{divisible by $k\in\nset$} if there exists a measure $\nu$ such that $\mu = \nu^{*k}$.
A measure $\mu$ on $\rset^n$ is called \emph{infinitely divisible} if it is divisible by any $k\in\nset$.
Infinitely divisible measures are fully characterized by the \emph{L\'evy--Khinchin formula}.

\begin{thm}[L\'evy--Khinchin, see e.g.\ {\cite[Cor.\ 15.8]{kallenberg02}} or {\cite[Satz 16.17]{klenkewtheorie}}]\label{thm:leviKhinchinFormula}
Let $n\in\nset$ and $\mu$ be a measure on $\rset^n$.
The following are equivalent:
\begin{enumerate}[(i)]
\item $\mu$ is infinitely divisible.

\item There exist a vector $b\in\rset^n$, a symmetric matrix $\Sigma\in\rset^{n\times n}$ with $\Sigma\succeq 0$, and a measure $\nu$ on $\rset^n$ such that
\[\log \int e^{itx}~\diff\mu(x) = itb - \frac{1}{2} t^T\Sigma t + \int (e^{itx} - 1 - itx \cdot\chi_{\|x\|_2<1})~\diff\mu(x)\]
holds for the characteristic function of $\mu$.
\end{enumerate}
\end{thm}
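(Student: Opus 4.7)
The plan is to prove the two directions separately. The implication (ii) $\Rightarrow$ (i) is the elementary direction: given $b$, $\Sigma \succeq 0$ and the (Lévy) measure $\nu$ (which must satisfy $\int \min(1,\|x\|_2^2)~\diff\nu(x)<\infty$ so the integrand $e^{itx}-1-itx\chi_{\|x\|_2<1}$ is $\nu$-integrable), define $\hat{\mu}_k(t)$ by the same formula with parameters $b/k$, $\Sigma/k$, $\nu/k$. Bochner's theorem (or a direct compound-Poisson verification) shows each $\hat{\mu}_k$ is the characteristic function of a probability measure $\mu_k$, and by construction $\hat{\mu}_k^{\,k}=\hat{\mu}$, hence $\mu_k^{*k}=\mu$, so $\mu$ is infinitely divisible.

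For the harder direction (i) $\Rightarrow$ (ii), the first step I would carry out is to show that $\hat{\mu}$ has no zeros on $\rset^n$. Writing $\mu=\mu_k^{*k}$ gives $\hat{\mu}=\hat{\mu}_k^{\,k}$; since $|\hat{\mu}_k(t)|=|\hat{\mu}(t)|^{1/k}\to 1$ uniformly on compacta whenever $\hat{\mu}(t)\neq 0$, a continuity/connectedness argument (the zero set would have to be both open and closed in its complement with $0$ excluded) rules out any zero. Consequently a continuous logarithm $\psi(t):=\log\hat{\mu}(t)$ with $\psi(0)=0$ is well defined, and $\log\hat{\mu}_k=\psi/k$.

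The core of the argument is then the compound-Poisson approximation: from $\hat{\mu}_k=1+\psi/k+O(1/k^2)$ one obtains
\begin{equation*}
\psi(t)=\lim_{k\to\infty} k(\hat{\mu}_k(t)-1)=\lim_{k\to\infty}\int_{\rset^n}(e^{itx}-1)~\diff(k\mu_k)(x).
\end{equation*}
I would then split the integrand using the canonical truncation $e^{itx}-1 = (e^{itx}-1-itx\chi_{\|x\|<1}) + itx\chi_{\|x\|<1}$. The bounded finite measures $k\mu_k$ restricted to $\{\|x\|\geq\varepsilon\}$ form a tight family (by the uniform control of $\hat{\mu}_k$), so along a subsequence they converge vaguely on $\rset^n\setminus\{0\}$ to a measure $\nu$ with $\int \min(1,\|x\|_2^2)~\diff\nu(x)<\infty$. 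The remaining small-$x$ part contributes, in the limit, the quadratic Gaussian term $-\tfrac{1}{2}t^T\Sigma t$ together with the linear drift $itb$: this is extracted via a Taylor expansion of $e^{itx}-1-itx$ around $0$ and a second-moment estimate on $k\mu_k$ restricted to $\{\|x\|<\varepsilon\}$, passing $\varepsilon\to 0$ at the end.

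The main obstacle is the delicate splitting near the origin, where the measures $k\mu_k$ generally blow up: one must simultaneously identify the Gaussian matrix $\Sigma$ as the limit of the (truncated) covariance matrices of $k\mu_k$, produce the drift $b$ as the limit of the first moments, and verify that the three limits $b$, $\Sigma$, $\nu$ do not depend on the choice of subsequence. Uniqueness is secured by noting that, given $\psi$, the formula in (ii) recovers $\Sigma$ from the second-order behavior of $\psi(t)+\int(e^{itx}-1-itx\chi_{\|x\|<1})~\diff\nu(x)$ as $\|t\|\to\infty$ is controlled, while $\nu$ is recovered as the vague limit $\nu=\lim_{k\to\infty}k\mu_k$ on $\rset^n\setminus\{0\}$, pinning down $b$ last.
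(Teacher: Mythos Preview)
The paper does not supply its own proof of this theorem: it is stated in the preliminaries as the classical L\'evy--Khinchin formula with references to \cite[Cor.~15.8]{kallenberg02} and \cite[Satz~16.17]{klenkewtheorie}, and is used as a black box in the proof of \Cref{thm:mainPosGenerators}. So there is no ``paper's proof'' to compare against; your proposal is an outline of the standard textbook argument, which is what those references contain.

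Your sketch follows the usual route (non-vanishing of $\hat\mu$, continuous logarithm, compound-Poisson approximation via $k(\hat\mu_k-1)$, vague compactness on $\rset^n\setminus\{0\}$ to extract $\nu$, and a small-ball second-moment analysis to isolate $\Sigma$ and $b$). Two points deserve tightening if you intend this as more than a plan. First, the non-vanishing argument as written is circular: you say $|\hat\mu_k(t)|\to 1$ uniformly on compacta ``whenever $\hat\mu(t)\neq 0$'' and then invoke a connectedness argument, but the standard clean proof uses $|\hat\mu|^2=\widehat{\mu*\mu^-}$ and shows directly that an infinitely divisible characteristic function with a zero would force $|\hat\mu|^{2/k}\to\chi_{\{\hat\mu\neq 0\}}$, contradicting continuity of characteristic functions. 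Second, the tightness of $k\mu_k$ away from the origin is not automatic from ``uniform control of $\hat\mu_k$''; one needs a quantitative bound such as $k\mu_k(\{\|x\|>r\})\leq C\int_{\|t\|\leq 1/r}\mathrm{Re}(1-\hat\mu_k(t))\,\diff t$ together with $k(1-\hat\mu_k)\to -\psi$ to get a uniform tail estimate. With those two gaps filled, your outline matches the standard proof in the cited references.
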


\section{The Regular Fr\'echet Lie Group Structure of Sequences}
\label{sec:sequences}

We define appropriate sets $\fS,\fs\subsetneq\rset^{\nset_0^n}$ of sequences with convolution $*$.
From these definitions we will see that the set of sequences $\rset^{\nset_0^n}$, and therefore also $\fS$ and $\fs$, inherit the Fr\'echet topology (\Cref{exm:powerSeriesTop}) from $\rset[[x_1,\dots,x_n]]$.

\begin{dfn}\label{dfn:sSD}
Let $n\in\nset$.
We define
\[\fS := \left\{ (s_\alpha)_{\alpha\in\nset_0^n}\in\rset^{\nset_0^n} \,\middle|\, s_0 = 1\right\}
\quad\text{and}\quad
\fs := \left\{ (s_\alpha)_{\alpha\in\nset_0^n}\in\rset^{\nset_0^n} \,\middle|\, s_0 = 0\right\}.\]
Define the linear and bijective map
\begin{equation}\label{eq:Dmap}
D:\rset^{\nset_0^n}\to\rset[[\partial_1,\dots,\partial_n]],\quad s=(s_\alpha)_{\alpha\in\nset_0^n}\mapsto D(s) := \sum_{\alpha\in\nset_0^n} \alpha!\cdot s_\alpha\cdot\partial^\alpha
\end{equation}
and on $\rset^{\nset_0^n}$ the convolution $*$ as
\[*:\rset^{\nset_0^n}\times\rset^{\nset_0^n}\to\rset^{\nset_0^n},\quad (s,t)\mapsto s*t := D^{-1}(D(s)D(t)).\]
We abbreviate
\[s^0 = \one := (\delta_{0,\alpha})_{\alpha\in\nset_0^n} \qquad\text{and}\qquad s^{*k} := \underbrace{s* \cdots * s}_{k\text{-times}}\]
for all $k\in\nset$ and $s\in\rset^{\nset_0^n}$.
\end{dfn}

The map $D$ in (\ref{eq:Dmap}) is of course a Fr\'echet space isomorphism. 
The map $D$ also equips $\fS$ and $\fs$ with the Fr\'echet topology of $\fD$ and $\fd$, respectively.
In summary, we have the following.

\begin{cor}\label{cor:groupIso}
Let $n\in\nset$. Then the following hold:
\begin{enumerate}[(i)]
\item $s*t\in\fS$ holds for all $s,t\in\fS$.
\item $D|_\fS:(\fS,*)\to(\fD,\,\cdot\,)$ is a Fr\'echet group isomorphisms.
\item $s*t\in\fs$ and $s+t\in\fs$ hold for all $s,t\in\fs$.
\item $D|_\fs:(\fs,*,+)\to(\fd,\,\cdot\,,+)$ is a Fr\'echet algebra isomorphisms.
\end{enumerate}
\end{cor}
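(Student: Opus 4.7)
The whole statement is a transport--of--structure argument: the convolution $*$ on $\rset^{\nset_0^n}$ is defined precisely so that $D$ intertwines it with the multiplication in $\rset[[\partial_1,\dots,\partial_n]]$, and \Cref{thm:mainFrechetLieGroups} has already done all the work on the $\fD$--$\fd$ side. Therefore my first step will be to verify that $D$ restricts to bijections $\fS\to\fD$ and $\fs\to\fd$. For $s=(s_\alpha)_{\alpha\in\nset_0^n}$ the coefficient of $\partial^0$ in $D(s)$ is $0!\cdot s_0=s_0$, so $s_0=1$ iff $D(s)\in\fD$ and $s_0=0$ iff $D(s)\in\fd$. Since $D:\rset^{\nset_0^n}\to\rset[[\partial_1,\dots,\partial_n]]$ is linear and bijective, the restrictions $D|_\fS$ and $D|_\fs$ are bijections onto $\fD$ and $\fd$ respectively.

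Given this, the four assertions unwind quickly. For (i): if $s,t\in\fS$, then $D(s),D(t)\in\fD$, and by \Cref{thm:liealgebra}(ii) the product $D(s)\cdot D(t)$ lies in $\fD$; pulling back through $D^{-1}$ yields $s*t\in\fS$. For (iii): the same reasoning with \Cref{thm:liealgebra}(i) gives $D(s)\cdot D(t)\in\fd$ for $s,t\in\fs$ (the constant term of a product of operators with zero constant term vanishes), so $s*t\in\fs$; closure under $+$ is immediate from the linearity of $D$, since $D(s+t)=D(s)+D(t)\in\fd$.

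For the isomorphism statements (ii) and (iv), the definition $s*t:=D^{-1}(D(s)D(t))$ says exactly $D(s*t)=D(s)\cdot D(t)$, so $D|_\fS$ is a group homomorphism, hence (being bijective) a group isomorphism. For (iv), $D|_\fs$ preserves $*$ for the same reason and preserves $+$ because $D$ is linear, so it is an algebra isomorphism between $(\fs,*,+)$ and $(\fd,\,\cdot\,,+)$. It only remains to verify that these are Fréchet isomorphisms. The space $\rset^{\nset_0^n}$ inherits the topology of coordinate--wise convergence, and under $D$ the $\alpha$--th coordinate is multiplied by the nonzero constant $\alpha!$; since the seminorms $|\,\cdot\,|_d$ of \Cref{exm:powerSeriesTop} involve only finitely many coordinates, both $D$ and $D^{-1}$ are continuous, so $D$ is a Fréchet space isomorphism, and the restrictions to the closed affine subspace $\fS$ and the closed linear subspace $\fs$ are Fréchet isomorphisms as well.

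The argument is entirely formal; the only place where one could slip is in overlooking that the rescaling factors $\alpha!$ do not disturb the Fréchet topology, and that closure of $\fd$ under $\cdot$ relies on the fact that the only way to obtain a constant term in a product is to multiply the constant terms, both of which are zero. Neither point presents a real obstacle.
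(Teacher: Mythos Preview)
Your argument is correct and is exactly the transport-of-structure reasoning the paper has in mind: the corollary is stated without proof, immediately after the remark that $D$ is a Fr\'echet space isomorphism and that $*$ is defined so that $D$ intertwines it with the product on $\rset[[\partial_1,\dots,\partial_n]]$. Your write-up simply makes explicit the routine verifications (bijectivity of the restrictions, closure under the operations, continuity of $D$ and $D^{-1}$) that the paper leaves to the reader.
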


\begin{thm}\label{thm:sequenceLieGroupAlgebra}
Let $n\in\nset$.
Then $(\fS,*)$ is a commutative regular Fr\'echet Lie group with the commutative Lie algebra $(\fs,*,+)$.
The exponential map
\[\exp:\fs\to\fS,\quad s\mapsto \exp(s) := \sum_{k\in\nset_0} \frac{s^{*k}}{k!}\]
is smooth and bijective with the smooth and bijective inverse
\[\log:\fS\to\fs,\quad s\mapsto \log(s) := -\sum_{k\in\nset} \frac{(\one-s)^{*k}}{k}.\]
\end{thm}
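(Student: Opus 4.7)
The plan is to transport the entire regular Fréchet Lie group structure of $(\fD,\,\cdot\,)$ established in \Cref{thm:mainFrechetLieGroups} to $(\fS,*)$ via the isomorphism $D$ from \Cref{dfn:sSD}. By \Cref{cor:groupIso}, the map $D|_\fS:(\fS,*)\to(\fD,\,\cdot\,)$ is simultaneously a Fréchet space isomorphism (since $D$ is linear, bijective and coordinate-wise continuous in both directions, as it only rescales coordinates by $\alpha!$) and a group isomorphism. Analogously, $D|_\fs:(\fs,*,+)\to(\fd,\,\cdot\,,+)$ is a Fréchet algebra isomorphism. Every structural property (smooth manifold structure, commutativity, smoothness of the group operations, regularity in the sense of \Cref{dfn:frechetLieGroup}(vi)) is preserved under such an isomorphism.

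Concretely, I would first note that $\fS$ inherits the smooth manifold structure from $\fD$ via $D^{-1}$, and that the tangent space at $\one=(\delta_{0,\alpha})_{\alpha}$ corresponds under $D$ to $T_\one \fD = \fd$, hence is identified with $\fs$. Next, by \Cref{cor:lieprops}(i) the map $(A,B)\mapsto AB^{-1}$ is smooth on $\fD\times\fD$; conjugating with $D$ shows $(s,t)\mapsto s* t^{*(-1)}$ is smooth on $\fS\times\fS$. This establishes items (i)--(iii) of \Cref{dfn:frechetLieGroup} for $(\fS,*)$.

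The exponential and logarithm formulas are then forced by compatibility. Since $D$ intertwines $*$ with $\,\cdot\,$, we have $D(s^{*k}) = D(s)^k$ for every $k\in\nset_0$, and since $D$ is linear and continuous in the Fréchet topology of coordinate-wise convergence (\Cref{exm:powerSeriesTop}), the partial sums
\[
D\!\left(\sum_{k=0}^K \frac{s^{*k}}{k!}\right) = \sum_{k=0}^K \frac{D(s)^k}{k!}
\]
converge in $\fD$ to $\exp D(s)$. The same argument applied to $\log$ shows
\[
D\!\left(-\sum_{k=1}^K \frac{(\one - s)^{*k}}{k}\right) \;\to\; -\sum_{k\in\nset} \frac{(\one - D(s))^{k}}{k} = \log D(s).
\]
Thus the $\exp$ and $\log$ maps defined by the stated series on $\fs$ and $\fS$ are precisely the pullbacks under $D$ of those on $\fd$ and $\fD$; by \Cref{thm:mainFrechetLieGroups} they are smooth, mutually inverse bijections, and satisfy $\left.\tfrac{\diff}{\diff t}\right|_{t=0}\exp(ts) = s$.

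The only genuine verification is that the series defining $\exp$ and $\log$ actually converge in the Fréchet topology of $\fs$ and $\fS$, but this is automatic: for $s\in\fs$ we have $s_0=0$, so $s^{*k}$ is supported on multi-indices $\alpha$ with $|\alpha|\geq k$, and the same truncation argument used in the proof of \Cref{thm:liealgebra}(iii) gives coordinate-wise stabilization. Hence no new analytic obstacle appears; the proof is essentially a single invocation of the transport-of-structure principle via $D$, and I expect it to be short.
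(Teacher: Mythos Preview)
Your proposal is correct and follows essentially the same approach as the paper: the paper's proof is a two-line transport-of-structure argument, invoking \Cref{cor:groupIso} to obtain the Fr\'echet Lie group and Lie algebra isomorphisms $D|_\fS$ and $D|_\fs$, and then applying them to \Cref{thm:mainFrechetLieGroups}. Your version spells out the same idea in more detail (intertwining of $\exp$ and $\log$ under $D$, coordinate-wise stabilization of the series), but there is no substantive difference in strategy.
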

\begin{proof}
From \Cref{cor:groupIso} we see that the map $D:\rset^{\nset_0^n}\to\rset^{\nset_0^n}$ induces the Fr\'echet Lie group isomorphism $D|_\fS:(\fS,*)\to(\fD,\,\cdot\,)$ and the Fr\'echet Lie algebra isomorphism $D|_\fs:(\fs,*,+)\to(\fd,\,\cdot\,,+)$.
Apply both isomorphisms to \Cref{thm:mainFrechetLieGroups} and the assertion is proved.
\end{proof}

We want to point out that Hirschman and Widder \cite{hirsch55} extensively investigated the inversion theory of convolution of measures and functions.
But from \Cref{thm:sequenceLieGroupAlgebra} we see that the convolution of their moments  is trivial when we allow signed moment sequences.

For $*$ on $\fS$ we find from (\ref{eq:measureConv}) the following.

\begin{cor}\label{cor:signedConv}
Let $n\in\nset$.
If $s\in\fS$ is represented by the signed measure $\mu$ and $t\in\fS$ is represented by the signed representing measure $\nu$ then $s*t$ is represented by the measure $\mu*\nu$.
\end{cor}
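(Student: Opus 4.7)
The plan is to transfer the algebraically defined convolution $*$ on $\fS$, which is built from the isomorphism $D$, over to the measure side by routing everything through the operator interpretation already given in (\ref{eq:convolution}). For any signed measure $\mu$ on $\rset^n$ with finite moments of all orders, write $T_\mu$ for the constant-coefficient operator on $\rset[x_1,\dots,x_n]$ defined by $(T_\mu f)(x) := \int f(x+y)\,\diff\mu(y)$. A Taylor expansion of $f(x+y)$ in $y$ shows that $T_\mu \in \fD$, with its coefficients read off from the moments $s_\alpha = \int y^\alpha\,\diff\mu(y)$; put differently, $T_\mu$ is exactly the image under $D$ of the moment sequence of $\mu$. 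This extends the positive-measure correspondence of \Cref{thm:posPresChar} to signed measures and is the only place the representation hypothesis enters.

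With this correspondence in place, the rest is a single chain of identifications. By assumption, $D(s) = T_\mu$ and $D(t) = T_\nu$. The definition of $*$ in \Cref{dfn:sSD} together with (\ref{eq:convolution}) then gives
\[ D(s*t) \;=\; D(s)\cdot D(t) \;=\; T_\mu\cdot T_\nu \;=\; T_{\mu * \nu} \;=\; D(r), \]
where $r$ denotes the moment sequence of $\mu*\nu$. The bijectivity of $D|_\fS$ from \Cref{cor:groupIso} now forces $s*t = r$, i.e., $s*t$ is represented by $\mu*\nu$.

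The one point that needs checking is that $\mu*\nu$ itself has moments of all orders, so that $D(r)$ makes sense and the last equality above is legitimate. This follows from (\ref{eq:measureConv}) and Fubini applied to $z^\alpha = (x+y)^\alpha$ after its multinomial expansion, using that both $\mu$ and $\nu$ are assumed to have moments of every order. I do not expect any serious obstacle beyond this bookkeeping: all of the heavy lifting about the group structure of $\fD$ (and hence of $\fS$) has already been done in \Cref{thm:mainFrechetLieGroups} and \Cref{thm:sequenceLieGroupAlgebra}, and the present corollary is essentially a dictionary entry translating the algebraic convolution on sequences, made available by $D$, into the familiar measure convolution.
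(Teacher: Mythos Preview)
Your argument is correct and matches the paper's intent: the paper offers no proof beyond the sentence ``For $*$ on $\fS$ we find from (\ref{eq:measureConv}) the following,'' and your route through the operator realization $T_\mu$ together with (\ref{eq:convolution}) is exactly the intended unpacking of that remark. The Fubini/multinomial check you flag is the only genuine verification needed, and you handle it.
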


Note, that every sequence $s\in\rset^{\nset_0^n}$ is represented by a signed measure, see e.g.\ \cite{polya38,boas39a,sherman64,schmud78}.
It is even possible to restrict the support to $[0,\infty)^n$, see e.g.\ \cite{boas39a,sherman64} or to use only linear combinations of Dirac measures \cite{bloom53}.

Because of the connection of positivity preservers to moment sequences by \Cref{thm:posPresChar} we define the following.

\begin{dfn}
Let $n\in\nset$.
We define
\[\fS_+ := \{s\in\fS \,|\, s\ \text{is a moment sequence}\}.\]
\end{dfn}

Then $\fS_+$ is a base of the moment cone.

\begin{rem}
For a general moment sequence $s = (s_\alpha)_{\alpha\in\nset_0^n}$ we only have $s_0 > 0$.
But, scaling $\tilde{s}= s_0^{-1}\cdot s\in\fS$ gives $s = \exp(\ln s_0 + \log \tilde{s})$.
\exmsymbol
\end{rem}

The Fr\'echet topology on $\fS$ and \Cref{cor:signedConv} imply the following.

\begin{cor}\label{cor:fSplusProperties}
Let $n\in\nset$.
The following hold:
\begin{enumerate}[(i)]
\item The set $\fS_+$ is convex and closed.

\item For all $s,t\in\fS_+$ we have $s*t\in\fS_+$.
\end{enumerate}
\end{cor}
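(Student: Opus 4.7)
The plan is to dispatch (ii) first, since it falls out of the measure-theoretic framework already assembled. Given $s,t\in\fS_+$, I would pick positive representing measures $\mu,\nu$ (with total mass $1$ because $s_0=t_0=1$) and invoke \Cref{cor:signedConv} to conclude that $s*t$ is represented by $\mu*\nu$. Since the defining formula (\ref{eq:measureConv}) for $\mu*\nu$ is an integral of non-negative integrands against positive measures, $\mu*\nu$ is itself positive, so $s*t$ is a moment sequence. Membership in $\fS$ (i.e.\ $(s*t)_0 = 1$) is guaranteed by \Cref{cor:groupIso}(i), hence $s*t\in\fS_+$.

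For (i), convexity is similarly immediate: for $s,t\in\fS_+$ with positive representing measures $\mu,\nu$ and any $\lambda\in[0,1]$, the measure $\lambda\mu+(1-\lambda)\nu$ is positive and represents $\lambda s + (1-\lambda)t$, while the normalization $\lambda s_0 + (1-\lambda)t_0 = 1$ shows that the convex combination remains in $\fS$.

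The main step is the closedness of $\fS_+$ in the Fréchet topology, which is coordinate-wise convergence (\Cref{exm:powerSeriesTop}). Here I would invoke Haviland's theorem: $s=(s_\alpha)_{\alpha\in\nset_0^n}$ is a moment sequence on $\rset^n$ if and only if its Riesz functional $L_s: p=\sum_\alpha p_\alpha x^\alpha \mapsto \sum_\alpha p_\alpha s_\alpha$ satisfies $L_s(p)\geq 0$ for every $p\in\pos(\rset^n)$. Assume $(s^{(k)})_{k\in\nset}\subseteq \fS_+$ with $s^{(k)}\to s$ coordinate-wise. For any fixed $p\in\pos(\rset^n)$ only finitely many coefficients $p_\alpha$ are nonzero, so $L_{s^{(k)}}(p)\to L_s(p)$, and the inequalities $L_{s^{(k)}}(p)\geq 0$ pass to the limit. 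Haviland then gives that $s$ is a moment sequence, and the limit $s_0 = \lim_k s_0^{(k)} = 1$ places $s$ in $\fS$, hence in $\fS_+$.

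The only point that requires some care is that Haviland's criterion is formulated in terms of the full cone $\pos(\rset^n)$ rather than a finitely generated subcone, so there is no finitary test to pass to the limit; however, because coordinate-wise convergence only involves finitely many coordinates for each test polynomial $p$, the argument goes through without any tightness or Prokhorov-type hypothesis on representing measures. I do not foresee any real obstacle beyond citing Haviland's theorem at the right moment.
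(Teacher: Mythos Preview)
Your proposal is correct and follows essentially the same approach as the paper: part (ii) via \Cref{cor:signedConv} and the positivity of $\mu*\nu$, and part (i) by noting convexity is immediate and proving closedness through the Riesz functional $L_s$ together with Haviland's theorem. Your write-up is slightly more explicit (spelling out why $\mu*\nu$ is positive and exhibiting the convex-combination measure), but there is no substantive difference.
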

\begin{proof}
(i): Convexity is clear.
It suffices to prove that $\fS_+$ is closed in the Fr\'echet topology.

Let $s^{(n)} = (s_\alpha^{(n)})_{\alpha\in\nset_0^n}\in\fS_+$ for all $n\in\nset_0$ be such that $s^{(n)}\to s$ in the Fr\'echet topology (\Cref{exm:powerSeriesTop}), i.e., $s_\alpha^{(n)}\to s_\alpha$ for all $\alpha\in\nset_0^n$.
Define $L_s:\rset[x_1,\dots,x_n]\to\rset$ by $L_s(x^\alpha) := s_\alpha$ for all $\alpha\in\nset_0^n$ and extend it linearly to $\rset[x_1,\dots,x_n]$.
Let $p\in\pos(\rset^n)$ then $0\leq L_{s^{(n)}}(p)\to L_s(p)\geq 0$ shows that $L_s$ is $\pos(\rset^n)$-positive, i.e., by Haviland's Theorem \cite{havila36} $L_s$ is a moment functional and $s\in\cS_+$.

(ii): Since $s,t\in\fS_+$ we have that $s$ is represented by $\mu$ and $t$ is represented by $\nu$.
Therefore, $s*t$ is represented by $\mu*\nu$ by \Cref{cor:signedConv}.
\end{proof}

The following is another trivial consequence of (\ref{eq:measureConv}).

\begin{cor}
Let $n\in\nset$ and $s,t\in\fS_+$.
If $s$ or $t$ is indeterminate then $s*t\in\fS_+$ is indeterminate.
\end{cor}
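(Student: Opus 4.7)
The plan is to build on \Cref{cor:signedConv}. Without loss of generality suppose that $s$ is indeterminate (the case of $t$ indeterminate is symmetric by commutativity of $*$), and choose two distinct positive representing measures $\mu_1 \ne \mu_2$ of $s$ together with any representing measure $\nu$ of $t$. By \Cref{cor:signedConv}, both $\mu_1 * \nu$ and $\mu_2 * \nu$ are positive representing measures of $s*t$, so the indeterminacy of $s*t$ reduces to the claim $\mu_1 * \nu \ne \mu_2 * \nu$.

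To prove this claim I would argue by contradiction: assume $\mu_1 * \nu = \mu_2 * \nu$ and set $\sigma := \mu_1 - \mu_2$, a non-zero finite signed measure on $\rset^n$ all of whose moments vanish (since $\mu_1$ and $\mu_2$ share the moment sequence $s$). Then $\sigma * \nu = 0$. Passing to characteristic functions yields $\widehat{\sigma}(\xi)\,\widehat{\nu}(\xi) = 0$ for every $\xi \in \rset^n$; since $\nu$ is a probability measure we have $\widehat{\nu}(0) = 1$, and by continuity $\widehat{\nu}$ is non-zero on some open neighborhood $U$ of the origin, so $\widehat{\sigma}\equiv 0$ on $U$. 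I would then cite the classical fact that the convolution algebra $M(\rset^n)$ of finite signed Borel measures is an integral domain (equivalently, convolution by a non-zero finite positive measure is injective on $M(\rset^n)$) to conclude $\sigma \equiv 0$, contradicting $\mu_1 \ne \mu_2$.

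The main obstacle is precisely the passage from $\widehat{\sigma}|_U \equiv 0$ to $\sigma \equiv 0$ globally: the characteristic function of a finite signed measure all of whose moments vanish is smooth but need not be analytic, and indeed the failure of analyticity is the very mechanism enabling indeterminate moment problems, so a naive power-series continuation is not available. I would therefore lean on the Titchmarsh-type cancellation property of $M(\rset^n)$ rather than re-derive it; an alternative, more hands-on strategy would be to combine the Jordan decomposition $\sigma = \sigma^+ - \sigma^-$ with the support identity (\ref{eq:measureConvolutionSupport}), although one then has to carefully control the possible cancellation between $\sigma^+ * \nu$ and $\sigma^- * \nu$, which can shrink supports.
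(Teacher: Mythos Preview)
Your reduction to showing $\mu_1 * \nu \neq \mu_2 * \nu$ matches what the paper has in mind --- the paper simply calls the corollary a ``trivial consequence of (\ref{eq:measureConv})'' and gives no further detail --- but the justification you offer for this inequality is wrong. The convolution algebra $M(\rset^n)$ is \emph{not} an integral domain, and convolution by a probability measure, even one with all moments finite, need not be injective on finite signed measures. Concretely, pick $\psi\in C_c^\infty(\rset)$ and let $\nu$ have density proportional to $|\hat\psi|^2$; then $\nu$ is a probability measure with Schwartz density (so all moments are finite) whose characteristic function $\hat\nu$ is compactly supported. Any real Schwartz function $g\not\equiv 0$ with $\hat g$ supported outside $\supp\hat\nu$ yields a non-zero signed measure $\sigma=g\,\diff x$ with $\hat\sigma\,\hat\nu\equiv 0$, hence $\sigma*\nu=0$. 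Since $\hat g$ is flat at the origin, all moments of $\sigma$ vanish; its Jordan decomposition then furnishes two \emph{distinct} probability measures $\mu_1,\mu_2$ with identical moment sequence and with $\mu_1*\nu=\mu_2*\nu$. The Titchmarsh theorem you allude to requires supports bounded on one side and does not apply on $\rset^n$, and your alternative via (\ref{eq:measureConvolutionSupport}) runs into exactly this cancellation, since $\sigma^+*\nu=\sigma^-*\nu$ in the example.

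So the gap is genuine: $\mu_1\neq\mu_2$ does not by itself force $\mu_1*\nu\neq\mu_2*\nu$, and neither of your two proposed fixes closes it. The paper's one-line proof does not address this point either.
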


\section{Generators of Positivity Preservers with Constant Coefficients}
\label{sec:generators}

In \Cref{sec:frechetLieGroupD} we proved that $\fD$ is a Fr\'echet Lie group with Fr\'echet Lie algebra $\fd$.
With the smooth and bijective $\exp:\fd\to\fD$ with the inverse $\log:\fD\to\fd$ we can now easily go from positivity preservers with constant coefficients to their generators.

\begin{dfn}\label{dfn:fd+}
Let $n\in\nset$. We define the set
\[\fD_+ := \{A\in\fD \,|\, A\ \text{is a positivity preserver}\}\]
of all \emph{positivity preservers with constant coefficients} and we define the set
\[\fd_+:=\left\{A\in\fd\,\middle|\,\exp(tA)\in\fD_+\ \text{for all}\ t\geq 0\right\}\]
of all \emph{generators of positivity preservers with constant coefficients}.
\end{dfn}

From \Cref{thm:posPresChar} we see that $D|_{\fS_+}:\fS_+\to\fD_+$ in (\ref{eq:Dmap}) is an isomorphism.

\begin{exms}\label{exm:fdPlus}
Let $n\in\nset$.
Then we have the following:
\begin{enumerate}[\bfseries\ (a)]
\item $c\cdot\Delta = c\cdot(\partial_1^2 + \dots + \partial_n^2)\in\fd_+$ for all $c\geq 0$ since $\Delta$ generates the heat equation/kernel.

\item $c_1\cdot\partial_1+ \dots + c_n\cdot\partial_n\in\fd_+$ for all $c_1,\dots,c_n\in\rset$ since $\partial_i$ generates the translation group in the direction $(c_1,\dots,c_n)\in\rset^n$.\exmsymbol
\end{enumerate}
\end{exms}

\begin{exm}\label{exm:notpos2}
Let $k\in\nset$ with $k\geq 3$.
Then $\partial_i^k\not\in\fd_+$, see \Cref{exm:notpos}.\exmsymbol
\end{exm}

The following result shows that the cases in \Cref{exm:fdPlus} are the only generators of positivity preserves of finite rank.

\begin{lem}
Let $A = \sum_{j=1}^k a_j\partial_x^j\in\fd_+$. Then $k\leq 2$.
\end{lem}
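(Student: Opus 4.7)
I would argue by contradiction: suppose $a_k\neq 0$ with $k\geq 3$, and derive $\nu=0$ from the L\'evy--Khinchin representation, contradicting $a_k\neq 0$. Since $A\in\fd_+$, \Cref{thm:infinitelyDivisibleMeasure} provides a convolution semigroup $(\mu_t)_{t\geq 0}$ of representing measures for $(\exp(tA))_{t\geq 0}$ with $\mu_1$ infinitely divisible. Because $\exp(tA)$ acts on polynomials via $(\exp(tA)p)(x)=\int p(x+y)\,\diff\mu_t(y)$, every $\mu_t$ has finite polynomial moments of all orders; this forces the L\'evy measure $\nu$ of $\mu_1$ (from \Cref{thm:leviKhinchinFormula}) to satisfy $m_j:=\int y^j\,\diff\nu(y)\in\rset$ for every $j\geq 2$.

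The core step is to read off the coefficients of $A$ from the L\'evy--Khinchin triple $(b,\sigma^2,\nu)$ of $\mu_1$. Expanding $p(x+y)$ in powers of $y$ yields
\[
(\exp(tA)p)(x)=\sum_{j=0}^{\deg p}\frac{M_j(t)}{j!}\,p^{(j)}(x), \qquad M_j(t):=\int y^j\,\diff\mu_t(y),
\]
so $(Ap)(x)=\sum_j \tfrac{\dot M_j(0)}{j!}\,p^{(j)}(x)$. Since the characteristic function satisfies $\int e^{iux}\,\diff\mu_t(x)=e^{t\psi(u)}$ with $\psi$ the L\'evy--Khinchin exponent, a Fa\`a di Bruno expansion of the $j$-th derivative at $u=0$ combined with $\psi(0)=0$ shows that only the single-block partition survives at $t=0$, so $\dot M_j(0)$ equals the $j$-th cumulant of $\mu_1$; for $j\geq 3$ this cumulant is exactly $m_j$. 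Hence
\[
a_j=\frac{1}{j!}\int y^j\,\diff\nu(y)\qquad\text{for every }j\geq 3.
\]

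Since $A$ is assumed of order $k$, the right-hand side vanishes for every $j>k$. Picking any \emph{even} $j_0>k$ (which exists because $k\geq 3$) gives $\int y^{j_0}\,\diff\nu(y)=0$ with nonnegative integrand and nonnegative measure, forcing $\nu$ to be concentrated at $0$; the convention $\nu(\{0\})=0$ for L\'evy measures then yields $\nu=0$. But then $a_k=\frac{1}{k!}\int y^k\,\diff\nu=0$, contradicting $a_k\neq 0$, so $k\leq 2$.

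The main obstacle I foresee is the clean identification $\dot M_j(0)=m_j$ for $j\geq 3$: the time-derivative of the polynomial $(\exp(tA)p)(x)$ has to be matched with the characteristic-function derivation of cumulants. Both reduce to routine power-series manipulations once finiteness of all moments of $\mu_t$ and $\nu$ (established in the first paragraph) is in hand.
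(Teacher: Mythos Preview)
Your argument is correct, but it takes a substantially different route from the paper's own proof. The paper argues directly and elementarily: assuming $k\ge 3$ and normalizing $a_k=1$, it uses \Cref{exm:notpos} to find a nonnegative $f_0$ and a point $x_0$ with $[\exp(\partial_x^k)f_0](x_0)=-1$, then observes that the rescalings $A_\lambda:=\sum_{j=1}^k \lambda^{k-j}a_j\partial_x^j$ lie in $\fd_+$ for all $\lambda>0$ (this is just a change of variable in $x$), and finally uses continuity of $\lambda\mapsto[\exp(A_\lambda)f_0](x_0)$ together with $A_0=\partial_x^k$ to produce a $\lambda_0>0$ with $[\exp(A_{\lambda_0})f_0](x_0)<0$, a contradiction. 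This needs nothing beyond \Cref{exm:notpos} and the smoothness of $\exp$ from \Cref{thm:mainFrechetLieGroups}.

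Your approach instead pulls in the heavy machinery of \Cref{thm:infinitelyDivisibleMeasure} and the L\'evy--Khinchin formula (essentially the content of \Cref{thm:mainPosGenerators}) to identify $a_j=\tfrac{1}{j!}\int y^j\,\diff\nu$ for $j\ge 3$, and then kills $\nu$ via an even moment beyond degree $k$. This is logically sound and not circular, since neither \Cref{thm:infinitelyDivisibleMeasure} nor \Cref{thm:mainPosGenerators} relies on the present lemma, but note that both results appear \emph{after} the lemma in the paper, so your proof would require forward references if placed at the lemma's current position. What your approach buys is a conceptual explanation (the lemma is really a shadow of the full characterization in \Cref{thm:mainPosGenerators}); what the paper's approach buys is self-containment and a proof that can be read before any of the infinitely-divisible theory is developed.
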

\begin{proof}
Let $k\geq 3$ and $a_k = 1$.
By \Cref{exm:notpos} (and \ref{exm:notpos2}) we have that $\exp(\partial_x^k)\not\in\fD_+$, i.e., it is not a positivity preserver.
Hence, there exists a $f_0\in\rset[x]$ with $f_0\geq 0$ and $x_0\in\rset$ such that $[\exp(\partial_x^k) f_0](x_0) = -1$.

Assume to the contrary that $A\in\fd_+$. By scaling $x$ and $A$ we have that
\begin{equation}\label{eq:aScaling}
A_\lambda := \sum_{j=1}^k \lambda^{k-j} a_j \partial_x^j\quad\in\fd_+
\end{equation}
holds for all $\lambda > 0$.
By \Cref{thm:mainFrechetLieGroups} we have that $[\exp(A_\lambda) f_0](x_0)$ is continuous in $\lambda$.
Since $(\exp(A_0) f_0)(x_0)=-1$ there exists a $\lambda_0 > 0$ such that $[\exp(A_{\lambda_0}) f_0](x_0) < 0$, i.e., $A\not\in\fd_+$ and therefore we have $k\leq 2$.
\end{proof}

It is easy to see that the previous result also holds for $n\geq 2$.
To see this let $\alpha\in\nset_0^n$ with $|\alpha|\geq 3$.
Then from \Cref{thm:posPresChar} it follows that $\exp(\partial^\alpha)\not\in\fD_+$.
Choosing the same scaling argument (\ref{eq:aScaling}) we find $\partial^\alpha\not\in\fd_+$.

For $\fD_+$ and $\fd_+$ the following holds.

\begin{cor}\label{cor:fDplusfdplusProps}
Let $n\in\nset$.
Then the following hold:
\begin{enumerate}[(i)]
\item $\fD_+$ is a closed and convex set.

\item $\fd_+$ is a non-trivial, closed, and convex cone.
\end{enumerate}
\end{cor}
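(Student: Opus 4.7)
The plan is to deduce (i) from the Fr\'echet isomorphism $D$ and then bootstrap (ii) from (i) using the commutative regular Fr\'echet Lie group structure already established in \Cref{thm:mainFrechetLieGroups}.

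For (i), the point is that by \Cref{thm:posPresChar} we have $\fD_+ = D(\fS_+)$, and by \Cref{cor:groupIso} the map $D$ is a Fr\'echet space isomorphism (in particular a linear homeomorphism between $\fS$ and $\fD$). Since convexity and closedness of $\fS_+$ are \Cref{cor:fSplusProperties}(i), they transfer immediately to $\fD_+$.

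For (ii), non-triviality is witnessed by any of the examples in \Cref{exm:fdPlus}, e.g.\ $\Delta \in \fd_+$. The cone property is immediate from the definition: if $A \in \fd_+$ and $c \geq 0$ then $\exp(t \cdot cA) = \exp((tc)A) \in \fD_+$ for all $t \geq 0$ because $tc \geq 0$. Closedness is a standard limit argument: if $A_k \to A$ in $\fd$, then smoothness (hence continuity) of $\exp:\fd\to\fD$ from \Cref{thm:mainFrechetLieGroups} gives $\exp(tA_k) \to \exp(tA)$ in $\fD$ for every fixed $t\geq 0$, and closedness of $\fD_+$ from (i) forces $\exp(tA) \in \fD_+$, so $A \in \fd_+$.

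The only step that requires care is convexity. For $A,B\in\fd_+$ and $\lambda \in [0,1]$, commutativity of the Lie algebra $\fd$ from \Cref{thm:liealgebra}(i) guarantees that $\lambda A$ and $(1-\lambda)B$ commute in $\fd$, so
\[ \exp\bigl(t(\lambda A + (1-\lambda) B)\bigr) = \exp(t\lambda A)\cdot \exp\bigl(t(1-\lambda) B\bigr) \]
for every $t\geq 0$. Both factors lie in $\fD_+$ since $t\lambda,\,t(1-\lambda)\geq 0$, and \Cref{cor:fSplusProperties}(ii) transported through $D$ (equivalently, \Cref{cor:signedConv} together with the fact that the convolution of two non-negative measures is non-negative) shows that $\fD_+$ is closed under composition. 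Hence the product lies in $\fD_+$, so $\lambda A+(1-\lambda)B\in\fd_+$.

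The main technical point—indeed the only non-automatic ingredient—is this convexity step, which crucially uses commutativity of $\fd$. Without it the identity $\exp(C+D)=\exp(C)\exp(D)$ would fail and convexity could not be cheaply reduced to closure of $\fD_+$ under composition. In the present commutative setting the argument goes through without further subtlety.
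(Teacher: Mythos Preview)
Your proof is correct and follows essentially the same route as the paper's: part (i) is pulled back through the linear homeomorphism $D$ from \Cref{cor:fSplusProperties}(i), and for part (ii) non-triviality, the cone property, closedness (via continuity of $\exp$ and (i)), and convexity (via $\exp(C+D)=\exp(C)\exp(D)$ in the commutative algebra $\fd$ together with $\fD_+$ being closed under composition) are handled exactly as in the paper. The only cosmetic difference is that the paper verifies additivity $A+B\in\fd_+$ and the cone property separately, whereas you check convex combinations directly; these are equivalent once the cone property is in hand.
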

\begin{proof}
(i): Follows from \Cref{cor:fSplusProperties} (i) with the bijective and linear map $D$ from \Cref{dfn:sSD} eq.\ (\ref{eq:Dmap}).

(ii):
For the non-triviality we have the non-trivial \Cref{exm:fdPlus}.

For the convexity let $A,B\in\fd_+$.
Since $(\fd,\,\cdot\,,+)$ is a commutative algebra we have that $\exp(t(A+B)) = \exp(tA) \exp(tB)$ and since the product of two positivity preservers is again a positivity preserver we have $A+B\in\fd_+$.

For the closeness let $A_n\in\fd_+$ for all $n\in\nset_0$ with $A_n\to A$ in the Fr\'echet topology of $\rset[[\partial_1,\dots,\partial_n]]$, see \Cref{exm:powerSeriesTop}.
By \Cref{thm:sequenceLieGroupAlgebra} we have that $\exp:\fd\to\fD$ is smooth, i.e., especially continuous.
Hence,
\[\fD_+\ni\exp(t A_n)\;\to\;\exp(t A)\in\fD_+\]
for all $t\geq 0$ since $\fD_+$ is closed by (i).
Hence, $A\in\fd_+$.

For the cone property let $A\in\fd_+$ then also $cA\in\fd_+$ for all $c\geq 0$.
\end{proof}

\begin{cor}\label{cor:posPDE}
Let $n\in\nset$ and $A\in\fd$.
Then the following are equivalent:
\begin{enumerate}[(i)]
\item $A\in\fd_+$.
\item The unique solution $p_t$ of
\begin{equation}\label{eq:timePDE}
\partial_t p = Ap
\end{equation}
for any initial value $p_0\in\pos(\rset^n)$ fulfills $p_t\in\pos(\rset^n)$ for all $t\geq 0$.
\end{enumerate}
\end{cor}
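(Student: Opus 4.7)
The plan is to identify the unique polynomial solution of the PDE (\ref{eq:timePDE}) with the flow generated by $A$ via the exponential map, and then read off the equivalence (i) $\Leftrightarrow$ (ii) directly from \Cref{dfn:fd+}. The work therefore splits cleanly: first an ODE-type statement to pin down the solution, then a tautological translation.

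For the identification, fix $p_0\in\rset[x_1,\dots,x_n]$ of degree $d$. Since every $A\in\fd$ has no constant term and all derivatives $\partial^\beta$ with $|\beta|>d$ annihilate $\rset[x_1,\dots,x_n]_{\leq d}$, the operator $A$ restricts to an element of $\fd_d$ acting on the finite-dimensional invariant subspace $\rset[x_1,\dots,x_n]_{\leq d}$ (in fact $Ap$ has strictly smaller degree than $p$, but only invariance is needed). Consequently (\ref{eq:timePDE}) becomes a linear ODE with constant operator on a finite-dimensional vector space, and the classical theory yields the unique solution
\[ p_t \;=\; \exp\!\bigl(t\,A|_{\rset[x_1,\dots,x_n]_{\leq d}}\bigr)\, p_0. \]
By \Cref{thm:liealgebrad} and the consistency argument used in the proof of \Cref{thm:liealgebra} (v) (the coefficients of $\exp(tA)p_0$ depend only on $A^k$ for $k\leq d$), this coincides with $\exp(tA)\,p_0$ in the sense of \Cref{thm:mainFrechetLieGroups}. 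In particular, the solution is polynomial for every $t\geq 0$ and unique among polynomial-valued solutions.

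With $p_t=\exp(tA)p_0$ established, the equivalence is immediate from \Cref{dfn:fd+}. If $A\in\fd_+$, then $\exp(tA)\in\fD_+$ for all $t\geq 0$, so $p_0\in\pos(\rset^n)$ implies $p_t=\exp(tA)p_0\in\pos(\rset^n)$ for every $t\geq 0$. Conversely, if (ii) holds then $\exp(tA)\pos(\rset^n)\subseteq\pos(\rset^n)$ for every $t\geq 0$, i.e., each $\exp(tA)$ is a positivity preserver, which is precisely the condition $A\in\fd_+$. The only non-trivial step is the reduction of (\ref{eq:timePDE}) to a finite-dimensional ODE, and since this is controlled by the degree-truncation $\fd_d$ already developed in \Cref{sec:liealgebra}, I do not anticipate any genuine obstacle.
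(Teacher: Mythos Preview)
Your proposal is correct and follows essentially the same approach as the paper: identify $p_t=\exp(tA)p_0$ as the unique solution and then read off the equivalence from \Cref{dfn:fd+}. You supply more detail than the paper does on the uniqueness step (reducing to a finite-dimensional ODE via $\fd_d$), whereas the paper simply asserts that $p_t=\exp(tA)p_0$ is the unique solution and proceeds directly to the tautology.
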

\begin{proof}
Since $p_t = \exp(tA)p_0$ is the unique solution of the time evolution (\ref{eq:timePDE}) we have that (i) $\Leftrightarrow$ $\exp(tA)$ is a positivity preserver for all $t\geq 0$ $\Leftrightarrow$ (ii).
\end{proof}

While we have $\fd_+ \subseteq \log\fD_+$ equality does not hold as we will see in \Cref{cor:generatorsNotLog}.
The existence of a positivity preserver is equivalent to the existence of an infinitely divisible representing measure as the following result shows.

\begin{mthm}\label{thm:infinitelyDivisibleMeasure}
Let $n\in\nset$.
Then the following are equivalent:
\begin{enumerate}[(i)]
\item $A\in\fd_+$.

\item $\exp A$ has an infinitely divisible representing measure.

\item $\exp(tA)$ has an infinitely divisible representing measure for some $t>0$.

\item $\exp(tA)$ has an infinitely divisible representing measure for all $t>0$.
\end{enumerate}
\end{mthm}
\begin{proof}
(i) $\Rightarrow$ (ii):
Let $A\in\fd_+$, i.e., $\exp (t A)\in\fD_+$ has a representing measure $\mu_t$ for all $t\in [0,\infty)$.
Set $\nu_k := (\mu_{1/k!})^{*k!}$.
Then $\nu_k$ is a representing measure of $\exp A $ for all $k\in\nset$.
Since $\rset[x_1,\dots,x_n]$ is an adapted space $(\nu_k)_{k\in\nset}$ is vaguely compact by \cite[Thm.\ 1.19]{schmudMomentBook} and there exists a subsequence $(k_i)_{i\in\nset}$ such that $\nu_{k_i}\to\nu$ and $\nu$ is a representing measure of $\exp A$.

It remains to show that $\nu$ is infinitely divisible, i.e., for every $l\in\nset$ there exists a measure $\omega_l$ with $\omega_l^{*l} = \nu$.

Let $l\in\nset$.
For $i\geq l$ we define
\[\omega_{l,i}:= (\mu_{1/k_i!})^{*k_i!/l}\]
i.e., $\omega_{l,i}$ is a representing measure of $\exp(A/l)$.
Again, since $\rset[x_1,\dots,x_n]$ is an adapted space by \cite[Thm.\ 1.19]{schmudMomentBook} there exists a subsequence $(i_j)_{j\in\nset}$ such that $\omega_{l,i_j}$ converges to some $\omega_l$, i.e., $\omega_{l,i_j}\xrightarrow{j\to\infty}\omega_l$.
Hence,
\[(\omega_l)^{*l} = \lim_{j\to\infty} (\omega_{l,i_j})^{*l} = \lim_{j\to\infty} \nu_{k_{i_j}} = \nu,\]
i.e., $\nu$ is divisible by all $l\in\nset$ and hence $\nu$ is an infinitely divisible representing measure of $\exp A$.

(ii) $\Rightarrow$ (i):
Let $\mu_1$ be an infinitely divisible representing measure of $\exp A$.
Then
\[\mu_q := \mu_1^{*q}\]
exists for all $q\in\qset\cap[0,\infty)$ and it is a representing measure of $\exp(qA)$, i.e., $\exp(qA)\in\fD_+$ for all $q\in [0,\infty)\cap\qset$.
Since by \Cref{thm:mainFrechetLieGroups} $\exp:\fd\to\fD$ is continuous and by \Cref{cor:fDplusfdplusProps} (i) $\fD_+$ is closed we have that $\exp(qA)\in\fD_+$ for all $q\geq 0$.
Hence, we have $A\in\fd_+$.

(iv) $\Rightarrow$ (iii): Clear.

(iii) $\Rightarrow$ (i): By ``(ii) $\Leftrightarrow$ (i)'' we have that $qtA\in\fd_+$ for $t>0$ and all $q\in [0,\infty)\cap\qset$.
Since $\fd_+$ is closed by \Cref{cor:fDplusfdplusProps} (ii) we have $q_itA\to A\in\fd_+$ for $q_i\in\qset$ with $q_i\to t^{-1}$ as $i\to\infty$.

(i) $\Rightarrow$ (iv): Since $A\in\fd_+$ and $\fd_+$ is a closed convex cone by \Cref{cor:fDplusfdplusProps} (ii) we have that $tA\in\fd_+$ for all $t>0$ and hence by ``(i) $\Leftrightarrow$ (ii)'' we have that $\exp(tA)$ has an infinitely divisible representing measure for all $t>0$.
\end{proof}

\begin{exm}[Example \ref{exm:fdPlus} (b) continued]
Let $n=1$ and $a\in\rset$.
Then
\[\exp(r\partial_x) = \sum_{k\in\nset_0} \frac{a^k}{k!}\cdot\partial_x^k\]
is represented by $\mu = \delta_a$ since $\delta_a$ is the representing measure of the moment sequence $(a^k)_{k\in\nset_0}$.
For any $r>0$ we have $\delta_{a/r}^{*r} = \delta_a$, i.e., $\delta_a$ is infinitely divisible.
In fact, $\delta_a$ are the only compactly supported infinite divisible measures, see e.g.\ \cite[p.\ 316]{klenkewtheorie}.
Hence, by \Cref{thm:infinitelyDivisibleMeasure} we have $\partial_x\in\fd_+$.
\exmsymbol
\end{exm}

\begin{exm}\label{exm:nonPosGen}
Let $A\in\fD_+$ be the positivity preserver represented by the measure
\[\diff\mu = \chi_{[0,1]^n}~\diff\lambda\]
where $\lambda$ is the $n$-dimensional Lebesgue measure and $\chi_{[0,1]^n}$ is the characteristic function of $[0,1]^n$.
Since $\supp\mu$ is compact $\mu$ is unique.

It is known that the only infinitely divisible measures with compact support are $\delta_x$ for $x\in\rset^n$, see e.g.\ \cite[p.\ 316]{klenkewtheorie}.
Therefore, we have that $\mu$ is not infinitely divisible and hence $\log A\not\in\fd_+$.
\exmsymbol
\end{exm}

The previous example implies that the inclusion $\fd_+\subseteq\log\fD_+$ is proper.

\begin{cor}\label{cor:generatorsNotLog}
Let $n\in\nset$.
Then $\fd_+ \subsetneq \log\fD_+$.
\end{cor}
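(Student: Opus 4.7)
The plan is to split the corollary into the easy inclusion and the strictness, the latter being essentially handed to us by \Cref{exm:nonPosGen} combined with \Cref{thm:infinitelyDivisibleMeasure}.

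First I would establish $\fd_+\subseteq\log\fD_+$. This is almost a tautology: if $A\in\fd_+$, then by definition $\exp(A)\in\fD_+$, and \Cref{thm:mainFrechetLieGroups} gives $A=\log(\exp A)\in\log\fD_+$. No work is required beyond citing bijectivity of $\exp:\fd\to\fD$.

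Next, for the strict inclusion, I would produce an explicit $A\in\fD_+$ with $\log A\notin\fd_+$. The candidate is exactly the operator of \Cref{exm:nonPosGen}, namely the positivity preserver represented by the uniform measure $\mu=\chi_{[0,1]^n}\,\diff\lambda$. The argument proceeds by contradiction: assume $\log A\in\fd_+$. Then by \Cref{thm:infinitelyDivisibleMeasure} ``(i)$\Rightarrow$(ii)'' applied to $\log A$, the operator $\exp(\log A)=A$ must admit an infinitely divisible representing measure. Because $\supp\mu=[0,1]^n$ is compact, $\mu$ is determinate and is therefore the only representing measure of $A$; but $\mu$ is not of the form $\delta_x$, so by the standard fact that the only infinitely divisible measures with compact support are Dirac measures (recalled in \Cref{exm:nonPosGen}), $\mu$ is not infinitely divisible. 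This contradicts the conclusion drawn from \Cref{thm:infinitelyDivisibleMeasure}, so $\log A\notin\fd_+$ while of course $\log A\in\log\fD_+$.

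There is no real obstacle; the only point requiring a moment's care is the uniqueness of the representing measure, since \Cref{thm:infinitelyDivisibleMeasure} only produces \emph{some} infinitely divisible representing measure, not necessarily $\mu$ itself. This is handled by the compact support of $\mu$, which forces determinacy and hence identifies the hypothetical infinitely divisible representative with $\mu$. With that in hand, the corollary follows in two or three lines.
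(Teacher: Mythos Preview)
Your proposal is correct and follows the same route as the paper: the inclusion $\fd_+\subseteq\log\fD_+$ is immediate from the definitions and the bijectivity of $\exp$, and the strictness is exactly \Cref{exm:nonPosGen}, whose content (uniqueness from compact support, and the fact that only Dirac measures are compactly supported and infinitely divisible) you have unpacked explicitly. The paper's own proof is a one-line citation of that example; your careful remark about determinacy is the right point to flag and is already handled there.
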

\begin{proof}
We have $\log\fD_+\setminus\fd_+\neq\emptyset$ by \Cref{exm:nonPosGen}.
\end{proof}

We have seen in \Cref{thm:infinitelyDivisibleMeasure} the one-to-one correspondence between a positivity preserver $A\in\fD_+$ having an infinitely divisible representing measure and $A\in\fD_+$ having a generator.
The infinitely divisible measures are fully characterized by the L\'evy--Khinchin formula, see \Cref{thm:leviKhinchinFormula}.
The L\'evi--Khinchin formula is used in the following result to fully characterize the generators $\fd_+$ of the positivity preservers $\fD_+$.

\begin{mthm}\label{thm:mainPosGenerators}
Let $n\in\nset$.
The following are equivalent:
\begin{enumerate}[(i)]
\item $A = \sum_{\alpha\in\nset_0^n\setminus\{0\}} \frac{a_\alpha}{\alpha!}\cdot\partial^\alpha \in\fd_+$.

\item There exists a symmetric matrix $\Sigma = (\sigma_{i,j})_{i,j=1}^n\in\rset^n$ with $\Sigma\succeq 0$, a vector $b = (b_1,\dots,b_n)^T\in\rset^n$, and a measure $\nu$ on $\rset^n$ with
\[\int_{\|x\|_2\geq 1} |x_i|~\diff\nu(x)<\infty \qquad\text{and}\qquad \int_{\rset^n} |x^\alpha|~\diff\nu(x)<\infty\]
for all $i=1,\dots,n$ and $\alpha\in\nset_0^n$ with $|\alpha|\geq 2$ such that
\begin{align*}
a_{e_i} &= b_i + \int_{\|x\|_2\geq 1} x_i~\diff\nu(x) && \text{for all}\ i=1,\dots,n,\\
a_{e_i+e_j} &= \sigma_{i,j} + \int_{\rset^n} x^{e_i + e_j}~\diff\nu(x) && \text{for all}\ i,j=1,\dots,n,
\intertext{and}
a_\alpha &= \int_{\rset^n} x^\alpha~\diff\nu(x) &&\text{for all}\ \alpha\in\nset_0^n\ \text{with}\ |\alpha|\geq 3.
\end{align*}
\end{enumerate}
\end{mthm}
\begin{proof}
By \Cref{thm:infinitelyDivisibleMeasure} ``(i) $\Leftrightarrow$ (ii)'' we have that (i) $A\in\fd_+$ if and only if $\exp A$ has an infinitely divisible representing measure $\mu$, i.e., by \Cref{thm:posPresChar} we have
\begin{equation}\label{eq:levi1}
\exp A = \sum_{\alpha\in\nset_0^n} \frac{1}{\alpha!}\cdot \int_{\rset^n} x^\alpha~\diff\mu(x)\cdot\partial^\alpha.
\end{equation}
By \Cref{thm:mainFrechetLieGroups} we can take the logarithm and hence (\ref{eq:levi1}) is equivalent to
\begin{equation}\label{eq:levi2}
A = \sum_{\alpha\in\nset_0^n\setminus\{0\}} \frac{a_\alpha}{\alpha!}\cdot\partial^\alpha = \log \left(\sum_{\alpha\in\nset_0^n} \frac{1}{\alpha!}\cdot \int_{\rset^n} x^\alpha~\diff\mu(x)\cdot\partial^\alpha\right).
\end{equation}
With the isomorphism
\[\cset[[\partial_1,\dots,\partial_n]]\to\cset[[t_1,\dots,t_n]],\ \partial_1\mapsto i t_1,\ \dots,\ \partial_n\mapsto it_n\]
we have that (\ref{eq:levi2}) is equivalent to
\begin{align}
\sum_{\alpha\in\nset_0^n\setminus\{0\}} \frac{a_\alpha}{\alpha!}\cdot(it)^\alpha
&= \log \left(\sum_{\alpha\in\nset_0^n} \frac{1}{\alpha!}\cdot \int_{\rset^n} x^\alpha~\diff\mu(x)\cdot (it)^\alpha\right).\label{eq:levi3}
\intertext{But the right hand side of (\ref{eq:levi3}) is now the characteristic function}
&= \log \int e^{i tx}~\diff\mu(x)\notag
\intertext{of the $\mu$.
Hence, by the L\'evy--Khinchin formula (see \Cref{thm:leviKhinchinFormula}) we have}
&= i bt -\frac{1}{2}t^T\Sigma t + \int (e^{itx} - 1 - itx \cdot\chi_{\|x\|_2<1})~\diff\nu(x).\label{eq:levi4}
\end{align}
After a formal power series expansion of $e^{itx}$ in the Fr\'echet topology of $\cset[[x_1,\dots,x_n]]$, see \Cref{exm:powerSeriesTop}, and a comparison of coefficients we have that (\ref{eq:levi4}) is equivalent to (ii) which ends the proof.
\end{proof}

Form the previous result we see that the difference between $\fd_+$ and a moment sequence is that the representing (L\'evy) measure $\nu$ in (\ref{eq:levi4}) can have a singularity of order $\leq 2$ at the origin.

\section{A Strange Action on $\pos([0,\infty))$}
\label{sec:strange}

Before we end this work we want to discuss an example of a strange positivity action on $[0,\infty)$.
We take the following example.

\begin{exm}
Let $\Delta=\partial_x^2$ on $L^2([0,\infty),\rset)$ with Dirichlet boundary conditions
\begin{align*}
\partial_t u &= \Delta u\\
u(0,t) &= 0 \quad\text{for all}\ t\geq 0\\
u(\,\cdot\,,0) &= u_0\in L^2([0,\infty),\rset).
\end{align*}
Then
\[u(x,t) = \int_0^\infty k_t(x,y) u_0(y)~\diff y\]
with
\[k_t(x,y) = \frac{1}{\sqrt{4\pi t}}\cdot e^{-(x-y)^2/4}\cdot \left[1-e^{xy/t}\right]\]
for all $x,y\geq 0$ and $t>0$, see \cite[Exm.\ 4.1.1]{davis89}.

For $u_0\in\cS([0,\infty),\rset)$ a Schwartz function on $[0,\infty)$ we have that $k_t*u_0\in\cS([0,\infty),\rset)$ for all $t>0$ and hence we look at the time-dependent moments
\[s_{j}(t) = \int_K x^j\cdot u(x,t)~\diff x = \int_{[0,\infty)^2} x^j\cdot k_t(x,y)\cdot u_0(y)~\diff x~\diff y\]
for all $j\in\nset_0$ and $t>0$.

The action on $\rset[x]$ is then given by $(T_tp)(x) = \int_0^\infty p(y)\cdot k_t(x,y)~\diff y$. We find for $p=1$ that
\[(T_t1)(x) = \int_0^\infty k_t(x,y)~\diff y\]
is a continuous, non-decreasing function in $x\in [0,\infty)$ with $(T_t1)(0) = 0$ and $\lim_{x\to\infty} (T_t1)(x) = 1$, i.e., $T_t1\notin\rset[x]$ for any $t\in (0,\infty)$.
While for $p=x^j$ with $j\in\nset$ we get for the time-dependent moments by partial integration
\begin{align*}
\partial_t s_j(t) &= \partial_t \int_0^\infty x^j\cdot u(x,t)~\diff x\\
&= \int_0^\infty x^j\cdot u(x,t)''~\diff x\\
&= \underbrace{x^j\cdot u(x,t)'~\Big|_{x=0}^\infty}_{=0} - \int_0^\infty j\cdot x^{j-1}\cdot u(x,t)'~\diff x\\
&= \underbrace{- j\cdot x^{j-1}\cdot u(x,t)~\Big|_{x=0}^\infty}_{=0} + \int_0^\infty j\cdot (j-1)\cdot x^{j-2}\cdot u(x,t)~\diff x\\
&= j\cdot (j-1)\cdot s_{j-2}(t).
\end{align*}
These are the recursive relations we already encountered with the heat equation on $\rset$, see \cite{curtoHeat22,curtoHeat23}.
Therefore, for \emph{odd} polynomials $p\in\rset[x]$ we have
\[(T_tp)(x)=\int_0^\infty p(y)\cdot k_t(x,y)~\diff y = (e^{t\partial_x^2}p)(x)\in\rset[x]\]
with $T_tx^{2d+1} = \fp_{2d+1}(x,t)$ for all $d\in\nset_0$ in \cite[Dfn.\ 3.1]{curtoHeat23} and if additionally $p\geq 0$ on $[0,\infty)$, then $T_tp\geq 0$ on $[0,\infty)$.
On the other hand for \emph{even} polynomials $p\in\rset[x]$ we find that $T_tp\notin C^\infty([0,\infty))\setminus\rset[x]$ but at least $T_tp\geq 0$ on $[0,\infty)$.
The reason for this strange behavior is of course the Dirichlet boundary condition and the resulting reflection principle.
This effect needs further investigations.
\exmsymbol
\end{exm}

This example shall also serve as a warning.
Just because the operator (symbol) is $\partial_x^2$ does not mean that the positivity preservers are $\exp(t\partial_x^2)$.
The domain and the corresponding boundary conditions have to be taken into account as is done and is well-known in the partial differential equation literature and semi-group theory.
Besides the \Cref{open} below this is another direction where further studies have to be done.

\section{Summary and Open Question}

If
\begin{equation}\label{eq:Adegree}
A = \sum_{\alpha\in\nset_0^n} q_\alpha\cdot\partial^\alpha \quad\text{with}\quad q_\alpha \in \rset[x_1,\dots,x_n]_{\leq |\alpha|}
\end{equation}
then
\begin{equation}\label{eq:degreePres}
A\rset[x_1,\dots,x_n]_{\leq d}\subseteq\rset[x_1,\dots,x_n]_{\leq d} \quad\text{for all}\quad d\in\nset_0,
\end{equation}
i.e., $A$ is called \emph{degree preserving}.
In fact for any linear $A:\rset[x_1,\dots,x_n]\to\rset[x_1,\dots,x_n]$ we have (\ref{eq:Adegree}) $\Leftrightarrow$ (\ref{eq:degreePres}).
(\ref{eq:Adegree}) $\Rightarrow$ (\ref{eq:degreePres}) is clear.
For (\ref{eq:degreePres}) $\Rightarrow$ (\ref{eq:Adegree}) take in (\ref{eq:Adegree}) the smallest $\alpha$ with respect to the lex-order such that $\deg q_\alpha > |\alpha|$.
Then $\deg A x^\alpha > |\alpha|$ which is a contradiction to (\ref{eq:degreePres}).

Similar to \Cref{cor:posPDE} we have that the partial differential equation
\[\partial_t p(x,t) = Ap(x,t) \quad\text{with}\quad p(x,0) = p_0(x)\in\rset[x_1,\dots,x_n]\]
and $A$ as in (\ref{eq:Adegree})
is equivalent to a linear system of ordinary differential equations in the coefficients of $p$.
Hence, it has a unique solution $p(\,\cdot\,,t) = \exp(tA)p_0$ for all $t\in\rset$, i.e., $\exp(tA)$ is well-defined for any $A$ as in (\ref{eq:Adegree}) and $t\in\rset$.
If additionally $A$ is a positivity preserver then $\exp(tA)$ is a degree preserving positivity preserver with polynomial coefficients for all $t\geq 0$.
In \Cref{thm:mainPosGenerators} we have already seen for the positivity preservers with constant coefficients that additional $A$ appear.
The restriction that $A$ is degree preserving is a natural restriction since otherwise $\exp(tA)\rset[x_1,\dots,x_n]\not\subseteq\rset[x_1,\dots,x_n]$ for any $t>0$.
So we arrive at the following open problem.

\begin{open}\label{open}
What is the description of the set
\[\left\{A = \sum_{\alpha\in\nset_0^n} q_\alpha\cdot\partial^\alpha \;\middle|\; \exp(tA)\ \text{is a positivity preserver for all}\ t\geq 0\right\}\]
of all positivity generators, i.e., not necessarily with constant coefficients?
\end{open}

The constant coefficient case is completely solved by \Cref{thm:mainPosGenerators}.
Part of the non-constant coefficient cases are covered by the previous case that $A$ is a degree preserving positivity preserver.
For the non-constant coefficient cases note that we are then in the framework of a non-commutative infinite dimensional Lie group and its Lie algebra.
Here the theory is much richer, see e.g.\ \cite{omori74,kac85,omori97,stras02,wurz04,schmed23} and references therein.

\section*{Funding}

The author and this project are supported by the Deutsche Forschungs\-gemein\-schaft DFG with the grant DI-2780/2-1 and his research fellowship at the Zukunfs\-kolleg of the University of Konstanz, funded as part of the Excellence Strategy of the German Federal and State Government.


\begin{thebibliography}{SHNW02}

\bibitem[Blo53]{bloom53}
M.~Bloom, \emph{On the total variation of solutions of the bounded variation
  moment problem}, Proc.\ Amer.\ Math.\ Soc. \textbf{4} (1953), 118--126.

\bibitem[Boa39]{boas39a}
R.~P. Boas, \emph{The {S}tieltjes moment problem for functions of bounded
  variation}, Bull.\ Amer.\ Math.\ Soc. \textbf{45} (1939), 399--404.

\bibitem[Bog07]{bogachevMeasureTheory}
V.~I. Bogachev, \emph{Measure {T}heory}, Springer-Verlag, Berlin, 2007.

\bibitem[Bor11]{borcea11}
J.~Borcea, \emph{Classification of linear operators presevering elliptic,
  positive and non-negative polynomials}, J.~reine angew.\ Math. \textbf{650}
  (2011), 67--82.

\bibitem[CdD22]{curtoHeat22}
R.~Curto and P.~J. di~Dio, \emph{Time-dependent moments from the heat equation
  and a transport equation}, Int.\ Math.\ Res.\ Notices (2022),
  {https://doi.org/10.1093/imrn/rnac244}.

\bibitem[CdDKM]{curtoHeat23}
R.~Curto, P.~J. di~Dio, M.~Korda, and V.~Magron, \emph{Time-dependent moments
  from partial differential equations and the time-dependent set of atoms},
  arXiv:2211.04416.

\bibitem[Cho69]{choquet69}
G.~Choquet, \emph{Lectures on {A}nalysis}, W. A. Benjamin, Inc., New York,
  Amsterdam, 1969, 3 volumes.

\bibitem[Dav89]{davis89}
E.~B. Davis, \emph{Heat kernels and spectral theory}, Cambridge University
  Press, Cambridge, UK, 1989.

\bibitem[GS08]{guterman08}
A.~Guterman and B.~Shapiro, \emph{On linear operators preserving the set of
  positive polynomials}, J. Fixed Point Theory Appl. \textbf{3} (2008),
  411--429.

\bibitem[Hav36]{havila36}
E.~K. Haviland, \emph{On the momentum problem for distribution functions in
  more than one dimension {II}}, Amer.\ J.\ Math. \textbf{58} (1936), 164--168.

\bibitem[HW55]{hirsch55}
I.~I. Hirschman and D.~V. Widder, \emph{The {C}onvolution {T}ransform},
  Princeton University Press, Princeton, N.J., 1955.

\bibitem[Kac85]{kac85}
V.~Kac (ed.), \emph{Infinite {D}imensional {G}roups with {A}pplications},
  Springer-Verlag, New York, Berlin, Heidelberg, Tokyo, 1985.

\bibitem[Kal02]{kallenberg02}
O.~Kallenberg, \emph{Foundations of {M}odern {P}robability}, 2nd ed.,
  Springer-Verlag, New York, Berlin, Heidelberg, 2002.

\bibitem[Kle06]{klenkewtheorie}
A.~Klenke, \emph{Wahrscheinlichkeitstheorie}, Springer, Berlin, 2006.

\bibitem[Les67]{leslie67}
J.~Leslie, \emph{On a differential structure for the group of diffeomorphisms},
  Topology \textbf{6} (1967), 263--271.

\bibitem[Net10]{netzer10}
T.~Netzer, \emph{Representation and {A}pproximation of {P}ositivity
  {P}reservers}, J.\ Geom.\ Anal. \textbf{20} (2010), 751--770.

\bibitem[Omo74]{omori74}
H.~Omori, \emph{Infinite {D}imensional {L}ie {T}ransformation {G}roups},
  Lecture Notes in Mathematics, no. 427, Springer-Verlag, Berlin, Heidelberg,
  New York, 1974.

\bibitem[Omo97]{omori97}
\bysame, \emph{Infinite-{D}imensional {L}ie {G}roups}, Translation of
  Mathematical Monographs, no. 158, American Mathematical Society, Providence,
  Rhode Island, 1997.

\bibitem[P{\'{o}}l38]{polya38}
G.~P{\'{o}}lya, \emph{Sur l'ind\'etermination d'un th\'eoreme voisin du
  probl\'eme des moments}, C.~R.\ Acad.\ Sci.\ Paris \textbf{207} (1938),
  708--711.

\bibitem[Sch78]{schmud78}
K.~Schm\"{u}dgen, \emph{Uniform {T}opologies on {E}nveloping {A}lgebras},
  J.~Funct.\ Anal. \textbf{39} (1978), 57--66.

\bibitem[Sch12]{schmudUnbound}
K.~Schm{\"u}dgen, \emph{Unbounded self-adjoint operators on {H}ilbert space},
  Springer, Dordrecht, 2012.

\bibitem[Sch17]{schmudMomentBook}
K.~Schm\"{u}dgen, \emph{The {M}oment {P}roblem}, Springer, New York, 2017.

\bibitem[Sch23]{schmed23}
A.~Schmeding, \emph{An {I}ntroduction to {I}nfinite-{D}imensional
  {D}ifferential {G}eometry}, Cambridge studies in advances mathematics, no.
  202, Cambridge Universtiy Press, Cambridge, UK, 2023.

\bibitem[She64]{sherman64}
T.~Sherman, \emph{A {M}oment {P}roblem on $\rset^n$}, Rend.\ Circ.\ Mat.\
  Palermo \textbf{13} (1964), 273--278.

\bibitem[SHNW02]{stras02}
A.~Strasburger, J.~Hilgert, K.-H. Neeb, and W.~Wojtynski (eds.), \emph{Geometry
  and {A}nalysis on {F}inite- and {I}nfinite-{D}imensional {L}ie {G}roups},
  Banach {C}enter {P}ublications, no.~55, Polish Academy of Sciences, Institute
  for Mathematics, Warszawa, 2002.

\bibitem[SW99]{schaefer99}
H.~H. Schaefer and M.~P. Wolff, \emph{Topological {V}ector {S}paces}, Graduate
  Texts in Mathematics, no.~3, Springer Science+Business Media, New York, 1999.

\bibitem[Tr{\`e}67]{treves67}
F.~Tr{\`e}ves, \emph{Topological {V}ector {S}paces, {D}istributions and
  {K}ernels}, Academic Press, New York, 1967.

\bibitem[War83]{warner83}
F.~W. Warner, \emph{Foundation of {D}ifferential {M}anifolds and {L}ie
  {G}roups}, Springer-Verlag, New York, Berlin, Heidelberg, 1983.

\bibitem[Wur04]{wurz04}
T.~Wurzbacher (ed.), \emph{Infinite {D}imensional {G}roups and {M}anifolds},
  IRMA Lectures in Mathematics and Theoretical Physics, no.~5, de Gruyter,
  Berlin, New York, 2004.

\end{thebibliography}

\providecommand{\bysame}{\leavevmode\hbox to3em{\hrulefill}\thinspace}
\providecommand{\MR}{\relax\ifhmode\unskip\space\fi MR }
\providecommand{\MRhref}[2]{%
  \href{http://www.ams.org/mathscinet-getitem?mr=#1}{#2}
}
\providecommand{\href}[2]{#2}

\end{document}